\documentclass[10pt]{amsart}
\usepackage[all]{xy}
\usepackage{amsfonts,amssymb,amscd,amsmath,enumerate,verbatim,calc}
\usepackage{mathrsfs}
\textwidth=16.00cm
\textheight=23.00cm
\topmargin=0.00cm
\oddsidemargin=0.0cm
\evensidemargin=0.0cm
\headheight=14.4pt
\headsep=1cm
\numberwithin{equation}{section}
\hyphenation{semi-stable}
\emergencystretch=10pt

\newtheorem{theorem}{Theorem}[section]
\newtheorem{corollary}[theorem]{Corollary}

\newtheorem{proposition}[theorem]{Proposition}
\newtheorem{definition}[theorem]{Definition}

\newtheorem{example}[theorem]{Example}
\newtheorem{remark}[theorem]{Remark}


\bibliographystyle{amsplain}
\begin{document}

\title[Twisted Lie algebras by invertible derivations]
 {Twisted Lie algebras by invertible derivations}

\bibliographystyle{amsplain}

\author[I. Basdouri, E. Peyghan and M.A. Sadraoui]{Imed Basdouri, Esmael Peyghan and Mohamed Amin Sadraoui}
\address{University of Gafsa, Faculty of Sciences Gafsa, 2112 Gafsa, Tunisia.}
\email{basdourimed@yahoo. fr}
\address{Department of Mathematics, Faculty of Science, Arak University,
	Arak, 38156-8-8349, Iran.}
\email{e-peyghan@araku.ac.ir}
\address{University of Sfax, Faculty of Sciences Sfax, BP
	1171, 3038 Sfax, Tunisia.}
\email{aminsadrawi@gmail. com}

\keywords{ }

\subjclass[2010]{}


\begin{abstract}
In this paper, we introduce an algebra structure denoted by InvDer algebra whose which we twist an algebra thanks to an invertible derivation, where its inverse is also a derivation. We define InvDer Lie algebras, InvDer associated algebras, InvDer zinbiel algebras and InvDer dendriforme algebras. We also study the relations between these structures using the Rota-Baxter operators and the endomorphism operators.

\end{abstract}

\maketitle

\section{Introduction}
A Lie algebra, named for Sophus Lie, is a vector space $L$ endowed with a skew symmetric bilinear map $[\cdot,\cdot]:L\otimes L \rightarrow L$ which satisfies the Jacobi identity
\begin{equation*}
\circlearrowleft_{x,y,z}[x,[y,z]]=[x,[y,z]]+[y,[z,x]]+[y,[z,x]]=0, \ \ \ \forall x,y,z \in L.
\end{equation*}
The study of these algebras goes back to the end of the nineteenth century when Sophus Lie investigated symmetry groups of differential equations. This investigations leads to study locally the Lie algebra structure on the tangent space in the identity element of the group. Lie algebras are at the core of several areas of mathematics, for examples, algebraic groups, quantum groups, representation theory, homogeneous spaces, integrable systems and algebraic topology.

Many researchers continue to study all its topics until the rises of Hom-Lie algebras in 2006, when J. T. Hartwig, D. Larrsson and S. D. Silvestrov define the notion of Hom-Lie algebras (see \cite{H1} for more details), using an homomorphism $\zeta$, where the Jacobi identity becomes 
\begin{equation*}
\circlearrowleft_{x,y,z}[(id+\zeta)(x),[y,z]]=0.
\end{equation*}

In 2010, A. Makhlouf and S. D. Silvestrov studied more precisely the notion of Hom-Lie algebras (see \cite{M1} for more details), where they changed $(id+\zeta)$ to be an homomorphism $\alpha$ and the Jacobi identity becomes
\begin{equation*}
\circlearrowleft_{x,y,z}[\alpha(x),[y,z]]=0.
\end{equation*}

Y. Donald in \cite{Y1}, gives the notion of twisted Hom algebras, the idea was to construct an Hom algebra $(L,\mu_{\alpha},\alpha)$ from an algebra $(L,\mu)$ where $\mu_{\alpha}=\alpha \circ \mu$ (see Theorem 2.4 in \cite{Y1}). 

Among the studies of Lie algebras, the derivation of a Lie algebra is investigated too. Let $(L,[\cdot,\cdot])$ be a Lie algebra and $\delta :L \rightarrow L$ be a linear map. $\delta$ is said to be a derivation on $(L,[\cdot,\cdot])$, if it satisfies 
\begin{equation*}
\delta([x,y])=[\delta x,y]+[x,\delta y] \, , \ \ \forall x,y \in L.
\end{equation*}

Many results are obtained from the concept of derivation, for instance in \cite{T1} the authors construct a new algebra via its derivation denoted by LieDer pair and study its cohomology. Also in \cite{C1} the authors characterized the space of all derivations of Lie algebras. the space of all derivations of a Lie algebra $L$ is denoted by \textbf{Der(L)}. From this space rises a Lie algebra  via the following bilinear map 
\begin{equation*}
[\delta_1,\delta_2]=\delta_1 \circ \delta_2-\delta_2 \circ \delta_1, \ \ \ \forall \delta_1,\delta_2 \in \textbf{Der(L)}.
\end{equation*}
Note that if $\delta$ is a derivation on $L$, then $\delta^2$ is not a derivation because the Leibniz rule does not hold
\begin{equation} \label{eq1.1}
\delta^2([x,y])=[\delta^2x,y]+[x,\delta^2y]+2[\delta x,\delta y], \ \ \ \ \forall x,y \in L.
\end{equation}

Inspired by the Yau's twist \cite{Y1} and the identity \eqref{eq1.1}, we have the idea to twist a Lie algebra $(L,[\cdot,\cdot])$ to a new one which its structure is characterized by $[\cdot,\cdot]_\delta=\delta \circ [\cdot,\cdot]$ and we get the following equation,
\begin{equation} \label{eq1.2}
\circlearrowleft_{x,y,z}[x,[y,z]_{\delta}]_{\delta}=\circlearrowleft_{x,y,z}[\delta x,[\delta y,z]]+\circlearrowleft_{x,y,z}[\delta x,[y,\delta z]]+\circlearrowleft_{x,y,z}[x,\delta^2[y,z]],
\end{equation} 
where for all $x,y,z \in L$
\begin{eqnarray*}
	\circlearrowleft_{x,y,z}[\delta x,[\delta y,z]]&=&[\delta x,[\delta y,z]]+[\delta y,[\delta z,x]]+[\delta z,[\delta x,y]], \\
	\circlearrowleft_{x,y,z}[\delta x,[y,\delta z]]&=&[\delta x,[y,\delta z]]+[\delta y,[z,\delta x]]+[\delta z,[x,\delta y]], \\
	\circlearrowleft_{x,y,z}[x,\delta^2[y,z]]&=&[x,\delta^2[y,z]]+[y,\delta^2[z,x]]+[z,\delta^2[x,y]].
\end{eqnarray*}
Unfortunately, the equation \eqref{eq1.2} does not lead to the Jacobi identity. So the main idea was to add a condition on the derivation $\delta$ to let us obtain the Jacobi identity which is mentioned in section \ref{sec1}.

The paper is organized as follows: In Section 2, we introduce the notion of InvDer Lie and pre Lie algebras thanks to an invertible derivation whose inverse is also a derivation by giving some examples. in Section 3, we study two passages between these two algebraic structures so we introduce the InvDer associative algebras by linking it with the InvDer Lie and pre Lie algebras. Finally in Section 4, we introduce the InvDer zinbiel and dendriform algebras.

Throughout this paper we consider $\mathbb{K}$ as a field of characteristic $0$ and all algebras are defined over $\mathbb{K}$.
\section{Lie and pre Lie algebras twisted by invertible derivation}
\label{sec1}

Recall first that if $\delta$ is a derivation on a Lie algebra $(L,[\cdot,\cdot])$ then we have the following equation 
\begin{equation*} 
\delta^2 [x,y]=[\delta^2x,y]+[x,\delta^2y]+2[\delta x,\delta y] \ \ \forall x,y \in L,
\end{equation*}
which means $\delta^2$ is not a derivation. 

In this section and from equation \eqref{eq1.1} we have the idea of using an invertible derivation $\delta$ to introduce a new algebra (respectively, pre-Lie algebra) $(L,[\cdot,\cdot]_{ \delta}=\delta \circ [\cdot,\cdot])$ (respectively, $(L,\ast_\delta=\delta \circ \ast)$) from $(L,[\cdot,\cdot]$) with a require condition which we mentioned it in Theorem \ref{thm2.1}. Let's first prepare, in the next proposition, the necessary tools that we need later.
\begin{proposition} \label{prop2.1}
	Let $(L,[\cdot,\cdot])$ be a Lie algebra, and $\delta$ is an invertible derivation. Then the following identity holds 
	\begin{equation} \label{eq2.2}
	[\delta x,\delta y]=\delta^2 [x,y] \Leftrightarrow \delta^{-1} \text{is a derivation}.
	\end{equation}
\end{proposition}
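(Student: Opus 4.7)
The plan is to note that both directions of the equivalence reduce to the single derivation identity $\delta[x,y] = [\delta x, y] + [x, \delta y]$ applied with substituted arguments, combined with invertibility of $\delta$.

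For the forward direction, I would assume $[\delta x, \delta y] = \delta^2[x,y]$ for all $x,y \in L$. Since $\delta$ is invertible, I can substitute $\delta^{-1}x$ for $x$ and $\delta^{-1}y$ for $y$, obtaining
\begin{equation*}
[x,y] = \delta^2[\delta^{-1}x, \delta^{-1}y],
\end{equation*}
hence $[\delta^{-1}x, \delta^{-1}y] = \delta^{-2}[x,y]$. On the other hand, applying the derivation identity for $\delta$ to the pair $(\delta^{-1}x, \delta^{-1}y)$ gives
\begin{equation*}
\delta[\delta^{-1}x, \delta^{-1}y] = [x, \delta^{-1}y] + [\delta^{-1}x, y].
\end{equation*}
The left-hand side equals $\delta \cdot \delta^{-2}[x,y] = \delta^{-1}[x,y]$, so $\delta^{-1}[x,y] = [\delta^{-1}x, y] + [x, \delta^{-1}y]$, i.e.\ $\delta^{-1}$ is a derivation.

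For the backward direction, I would assume $\delta^{-1}[x,y] = [\delta^{-1}x, y] + [x, \delta^{-1}y]$ and substitute $\delta x$ for $x$ and $\delta y$ for $y$, yielding
\begin{equation*}
\delta^{-1}[\delta x, \delta y] = [x, \delta y] + [\delta x, y] = \delta[x,y],
\end{equation*}
where the second equality uses that $\delta$ is a derivation. Applying $\delta$ to both sides gives $[\delta x, \delta y] = \delta^2[x,y]$.

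There is no serious obstacle; the content is entirely bookkeeping in the derivation rule, with invertibility of $\delta$ used only to license the substitutions $x \mapsto \delta^{\pm 1}x$. The two directions are essentially mirror images of each other, interchanging the roles of $\delta$ and $\delta^{-1}$.
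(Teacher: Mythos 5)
Your proof is correct and follows essentially the same route as the paper's: the forward direction substitutes $\delta^{-1}x,\delta^{-1}y$ into the hypothesis and then invokes the Leibniz rule for $\delta$, and the backward direction is the paper's computation $\delta^{2}[x,y]=\delta\bigl(\delta^{-1}[\delta x,\delta y]\bigr)$ read in the opposite order. No gaps.
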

\begin{proof}
	Let $x,y \in L$. For the first sense, let's suppose that  $[\delta x,\delta y]=\delta^2 [x,y]$ and prove that $\delta^{-1}$ is a derivation. Using \eqref{eq2.2}, we get
	\begin{align*}
	\delta^{-1}[x,y]=\delta^{-1}[\delta \circ \delta^{-1} x,\delta \circ \delta^{-1} y]=\delta^{-1}\circ  \delta^2  [\delta^{-1}x,\delta^{-1}y]=\delta [\delta^{-1}x,\delta^{-1}y]=[\delta^{-1}x,y]+[x,\delta^{-1}y],
	\end{align*}
	which means $\delta^{-1}$ is a derivation. Conversely, we suppose that $\delta^{-1}$ is a derivation and prove that $[\delta x,\delta y]=\delta^2 [x,y]$. Direct calculations give us
	\begin{align*}
	\delta^2[x,y]=\delta ( \delta [x,y]) =\delta ([\delta x,y]+[x,\delta y] )=\delta ([\delta x,\delta^{-1} \circ \delta y]+[\delta^{-1} \circ \delta x,\delta y]) = \delta ( \delta^{-1}[\delta x,\delta y])=[\delta x,\delta y].
	\end{align*}
	This completes the proof.
\end{proof} 
\begin{remark}
	For the second sense there is another manner to prove that $[\delta x,\delta y]=\delta^2 [x,y]$. Using equation \eqref{eq1.1} we get 
	\begin{align*}
	\delta^2 [x,y]&=[\delta^2x,y]+[x,\delta^2y]+2[\delta x,\delta y] \\
	&=[\delta^2x,\delta^{-1} \circ \delta y]+[\delta^{-1} \circ \delta x,\delta^2y]+[\delta^{-1} \circ \delta^2 x,\delta y]+[\delta x,\delta^{-1} \circ \delta^2 y] \\
	&=\delta^{-1}\Big([\delta^2x,\delta y] \Big)+\delta^{-1}\Big([\delta x,\delta^2 y] \Big) \\
	&= \delta^{-1} \Big(\delta[\delta x,\delta y] \Big) \\
	&=[\delta x,\delta y].
	\end{align*}
\end{remark}
In the next theorem, inspired by \cite{Y1}, we present a Lie algebra structure by an invertible derivation such that its inverse is also a derivation. 
\begin{theorem}\label{thm2.1}
	Let $(L,[\cdot,\cdot])$ be a Lie algebra and $\delta:L \rightarrow L$ be an invertible derivation such that its inverse is a derivation. Then $(L,[\cdot,\cdot]_{\delta}=\delta \circ [\cdot,\cdot])$ is a Lie algebra, which is called \textbf{ twisted Lie algebra by an invertible derivation }. 
\end{theorem}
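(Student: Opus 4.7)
The plan is to verify the two Lie axioms for the new bracket $[\cdot,\cdot]_\delta = \delta\circ[\cdot,\cdot]$. Skew-symmetry is immediate from the linearity of $\delta$ and the skew-symmetry of $[\cdot,\cdot]$, namely $[x,y]_\delta = \delta[x,y] = -\delta[y,x] = -[y,x]_\delta$, so the substantive content is the Jacobi identity.

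For Jacobi, I would start from the decomposition already recorded as equation \eqref{eq1.2}, which splits $\circlearrowleft_{x,y,z}[x,[y,z]_{\delta}]_{\delta}$ into the three cyclic sums $\circlearrowleft[\delta x,[\delta y,z]]$, $\circlearrowleft[\delta x,[y,\delta z]]$, and $\circlearrowleft[x,\delta^2[y,z]]$. This expansion uses only that $\delta$ is a derivation (Leibniz rule applied to $\delta[x,\delta[y,z]]$ together with $\delta[y,z] = [\delta y,z]+[y,\delta z]$), so at this point the invertibility hypothesis has not yet been used. The job is to show that the right-hand side vanishes, and this is where Proposition \ref{prop2.1} enters: because $\delta^{-1}$ is a derivation, we have $[\delta y,\delta z]=\delta^2[y,z]$, which lets me rewrite the third cyclic sum as $\circlearrowleft[x,[\delta y,\delta z]]$.

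Now I would apply the original Jacobi identity to the triple $(x,\delta y,\delta z)$ to get $[x,[\delta y,\delta z]] = -[\delta y,[\delta z,x]] - [\delta z,[x,\delta y]]$, and sum cyclically in $(x,y,z)$. A direct relabeling of the cyclic orbit shows
\begin{equation*}
\circlearrowleft_{x,y,z}[\delta y,[\delta z,x]] = \circlearrowleft_{x,y,z}[\delta x,[\delta y,z]], \qquad \circlearrowleft_{x,y,z}[\delta z,[x,\delta y]] = \circlearrowleft_{x,y,z}[\delta x,[y,\delta z]],
\end{equation*}
so the third cyclic sum in \eqref{eq1.2} is precisely the negative of the first two, and the total vanishes. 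The only genuinely delicate step is the bookkeeping of the cyclic permutations used in these two identifications; the hypothesis that $\delta^{-1}$ is a derivation enters through Proposition \ref{prop2.1} solely to expose this cancellation, and without it the third term of \eqref{eq1.2} cannot be matched against the first two.
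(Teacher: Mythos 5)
Your proof is correct and follows essentially the same route as the paper: both rest on the decomposition \eqref{eq1.2}, the identity $[\delta y,\delta z]=\delta^2[y,z]$ from Proposition \ref{prop2.1}, and the Jacobi identity of the original bracket applied to triples containing $\delta$. The only difference is the order of operations — the paper collapses the first two cyclic sums via Jacobi into $-\circlearrowleft_{x,y,z}[x,[\delta y,\delta z]]$ and then invokes Proposition \ref{prop2.1}, whereas you rewrite the third sum first — and your cyclic relabelings check out.
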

\begin{proof}
	Let $x,y,z \in L$. We have
	$$[x,y]_{\delta}=\delta \circ [x,y]=-\delta \circ [y,x]=-[y,x]_{\delta},$$
	which means that the skew symmetry property holds. Now we check Jacobi identity. We have
	\begin{align*}
	[x,[y,z]_{\delta}]_{\delta}&=\delta \circ [x,[y,z]_{\delta}] \\
	&=[\delta x,[y,z]_{\delta}]+[x,\delta [y,z]_{\delta}] \\
	&=[\delta x,[\delta y,z]]+[\delta x,[y,\delta z]]+[x,\delta^2[y,z]],
	\end{align*}
	which implies
	$$[x,[y,z]_{\delta}]_{\delta}=[\delta x,[\delta y,z]]+[\delta x,[y,\delta z]]+[x,\delta^2[y,z]].$$
	Similarly, we obtain 
	\begin{align*}
	[y,[z,x]_{\delta}]_{\delta}&=[\delta y,[\delta z,x]]+[\delta y,[z,\delta x]]+[y,\delta^2[z,x]], \\
	[z,[x,y]_{\delta}]_{\delta}&=[\delta z,[\delta x,y]]+[\delta z,[x,\delta y]]+[z,\delta^2[x,y]].
	\end{align*}
	So 
	\begin{align*}
	\circlearrowleft_{x,y,z}[x,[y,z]_{\delta}]_{\delta}&=[\delta x,[\delta y,z]]+[\delta y,[z,\delta x]]+[\delta x,[y,\delta z]]+[\delta z,[\delta x,y]]\\
	&\ \ +[\delta y,[\delta z,x]]+[\delta z,[x,\delta y]]+[x,\delta^2[y,z]]+[y,\delta^2[z,x]]+[z,\delta^2[x,y]].
	\end{align*}
	Using the Jacobi identity of $(L,[\cdot,\cdot])$, we obtain 
	\begin{align*}
	\circlearrowleft_{x,y,z}[x,[y,z]_{\delta}]_{\delta}&=[[\delta x,\delta y],z]+[y,[\delta x,\delta z]]+[[\delta y,\delta z],x]+[x,\delta^2[y,z]]	+[y,\delta^2[z,x]]+[z,\delta^2[x,y]] \\
	&=-[z,[\delta x,\delta y]]-	[y,[\delta z,\delta x]]-[x,[\delta y,\delta z]]+[x,\delta^2[y,z]]	+[y,\delta^2[z,x]]+[z,\delta^2[x,y]].
	\end{align*} 
	Setting \eqref{eq2.2} in Proposition \ref{prop2.1}, we obtain 
	\begin{equation*} 
	\circlearrowleft_{x,y,z}[x,[y,z]_{\delta}]_{\delta}=-\circlearrowleft_{x,y,z}[x,\delta^2[y,z]]+\circlearrowleft_{x,y,z}[x,\delta^2[y,z]],
	\end{equation*}
	which gives
	\begin{equation} \label{eq2.3}
	\circlearrowleft_{x,y,z}[x,[y,z]_{\delta}]_{\delta}=0.
	\end{equation}
	So, the Jacobi identity holds. This completes the proof.
\end{proof}
In the following example, we show in the definition of a twisted Lie algebra, the inverse of $\delta$ must be derivative.
\begin{example}
	Let $(L,[\cdot,\cdot])$ be a Lie algebra and $\delta :L \rightarrow L$ be an invertible derivation. Defining a Lie bracket by
	$$[\cdot,\cdot]_{\delta}=\delta \circ [\cdot,\cdot],$$
 we get
	$$[x,[y,z]_{\delta}]_{\delta}=\delta \circ [x,[y,z]_{\delta}]
	=[\delta x,\delta \circ [y,z]]+[x,\delta \circ [y,z]],$$ 
	which implies
	$$[x,[y,z]_{\delta}]_{\delta}=[\delta x,\delta \circ [y,z]]+[x,\delta \circ [y,z]].$$
	Similarly, we obtain
	$$[y,[z,x]_{\delta}]_{\delta}=[\delta y,\delta \circ [z,x]]+[y,\delta \circ [z,x]],$$
	and
	$$[z,[x,y]_{\delta}]_{\delta}=[\delta z,\delta \circ [x,y]]+[z,\delta \circ [x,y]].$$
Using the above equations we obtain
	$\circlearrowleft_{x,y,z}[x,[y,z]_{\delta}]_{\delta}\neq 0$, 
	which implies that $(L,[\cdot,\cdot]_{\delta})$ is not a Lie algebra.	
\end{example}
Here, we present an example of a twisted Lie algebra.
\begin{example}
	Let $L$ be a $3$-dimensional Lie algebras of basis $\{e_1,e_2,e_3\}$ with
	\begin{equation*}
		[e_1,e_2]=e_3, \ \ \ [e_2,e_3]=e_1,\ \ \  [e_3,e_1]=e_2,
	\end{equation*} 
	and $\delta :L \rightarrow L$ be an invertible derivation such that its inverse is a derivation, where
	\begin{align*}
	\delta(e_1)=a_1e_1+a_2e_2+a_3e_3,\ \ \ \delta(e_2)=b_1e_1+b_2e_2+b_3e_3,\ \ \ \delta(e_1)=c_1e_1+c_2e_2+c_3e_3.
	\end{align*}
	Then $(L,[\cdot,\cdot]_{\delta})$ is a twisted Lie algebra. Indeed, we have
	\begin{align*}
	[e_1,e_2]_{\delta}&=-a_3e_1-b_3e_2+(a_1+b_2)e_3,\ \ \ [e_2,e_3]_{\delta}=(b_2+c_3)e_1-b_1e_2-c_1e_3,\\
	[e_3,e_1]_{\delta}&=-a_2e_1+(a_1+c_3)e_2-c_2e_3.
	\end{align*}
	So, we obtain
	\begin{align*}
	[e_1,[e_2,e_3]_{\delta}]_{\delta}&=-b_1(-a_3e_1-b_3e_2+(a_1+b_2)e_3)+c_1(-a_2e_1+(a_1+c_3)e_2-c_2e_3),\\
	[e_2,[e_3,e_1]_{\delta}]_{\delta}&=a_2(-a_3e_1-b_3e_2+(a_1+b_2)e_3)-c_2((b_2+c_3)e_1-b_1e_2-c_1e_3),\\
	[e_3,[e_1,e_2]_{\delta}]_{\delta}&=-a_3(-a_2e_1+(a_1+c_3)e_2-c_2e_3)+b_3((b_2+c_3)e_1-b_1e_2-c_1e_3).
	\end{align*}
	These calculations lead to verify the Jacoby identity.
\end{example}
Theorem \ref{thm2.1} says that a Lie algebra deform into a new Lie algebra via an invertible derivation such that its inverse is also a derivation. Note that the Lie bracket $[\cdot,\cdot]_{\delta}=\delta \circ [\cdot,\cdot]$ is an inspiration of \cite{Y1}, where the twist map is a linear map $\alpha$ such that $[\cdot,\cdot]_{\alpha}=\alpha \circ [\cdot,\cdot]$.
\begin{remark}
	Let's focus on the equation \eqref{eq2.3}. Since $\delta$ is invertible, then we have $\circlearrowleft_{x,y,z}[x,[y,z]_{\delta}]_{\delta}=0$, and consequently $\delta ( \circlearrowleft_{x,y,z}[x,[y,z]_{\delta}])=0$. Since $\ker(\delta)=0$, so we derive that 
	\begin{equation} \label{eq2.4}
	\circlearrowleft_{x,y,z}[x,[y,z]_{\delta}]=0.
	\end{equation}
\end{remark}
This remark allows us to announce the next proposition.
\begin{proposition} \label{prop2.2}
	Let $(L,[\cdot,\cdot])$ be a Lie algebra and $\delta$ be an invertible derivation such that its inverse is a derivation on $[\cdot,\cdot]$. Then, the following equation holds
	\begin{equation} \label{eq2.5}
	\circlearrowleft_{x,y,z}[x,[y,z]_{\delta}]=	\circlearrowleft_{x,y,z}[\delta x,[y,z]],\ \ \ \forall x,y,z \in L.
	\end{equation}	
\end{proposition}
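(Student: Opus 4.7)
The plan is to show that \emph{both} cyclic sums in \eqref{eq2.5} vanish separately. The left-hand side $\circlearrowleft_{x,y,z}[x,[y,z]_{\delta}]$ already equals zero by equation \eqref{eq2.4} of the preceding remark, so the real task is to establish $\circlearrowleft_{x,y,z}[\delta x,[y,z]] = 0$ as well.

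For this I would apply the Leibniz rule of the derivation $\delta$ to $[x,[y,z]]$, giving
$$\delta[x,[y,z]] = [\delta x,[y,z]] + [x,\delta[y,z]] = [\delta x,[y,z]] + [x,[y,z]_{\delta}],$$
where the second equality uses $[y,z]_{\delta} = \delta[y,z]$ by definition. Summing this identity cyclically over $x,y,z$, the left-hand side becomes $\delta\bigl(\circlearrowleft_{x,y,z}[x,[y,z]]\bigr)$, which is zero by the Jacobi identity for $(L,[\cdot,\cdot])$. Since the second summand on the right is zero by \eqref{eq2.4}, the first summand $\circlearrowleft_{x,y,z}[\delta x,[y,z]]$ must vanish as well.

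Both sides of \eqref{eq2.5} are therefore zero and are in particular equal. There is no genuine obstacle: the identity is really the conjunction of two separate vanishings, each of which is an immediate consequence of the Leibniz rule combined with the Jacobi identity for $[\cdot,\cdot]$ and equation \eqref{eq2.4}.
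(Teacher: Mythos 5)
Your proof is correct, but it follows a genuinely different route from the paper's. The paper never separates the two sides: it expands $\circlearrowleft_{x,y,z}[x,\delta[y,z]]$ by the Leibniz rule into six terms and then regroups them with the Jacobi identity of $(L,[\cdot,\cdot])$ to turn the left cyclic sum directly into the right one, without ever invoking \eqref{eq2.4}. You instead show that each side of \eqref{eq2.5} vanishes on its own, using \eqref{eq2.4} for the left side and the cyclically summed Leibniz identity $\delta\bigl(\circlearrowleft_{x,y,z}[x,[y,z]]\bigr)=\circlearrowleft_{x,y,z}[\delta x,[y,z]]+\circlearrowleft_{x,y,z}[x,[y,z]_{\delta}]$ for the right. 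Your decomposition actually buys something: that Leibniz identity shows that for \emph{any} derivation one has $\circlearrowleft_{x,y,z}[x,[y,z]_{\delta}]=-\circlearrowleft_{x,y,z}[\delta x,[y,z]]$, i.e.\ the two cyclic sums are negatives of each other in general, and indeed a careful regrouping in the paper's computation produces this minus sign rather than the plus sign the paper writes. So the stated equality \eqref{eq2.5} genuinely requires the vanishing supplied by \eqref{eq2.4} (hence the hypothesis that $\delta^{-1}$ is a derivation), and your argument makes that dependence explicit where the paper's direct computation obscures it. The only cost is that your proof leans on the Remark and hence on Theorem \ref{thm2.1}, whereas the paper's (sign issue aside) is a self-contained computation.
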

\begin{proof}
	Let $x,y,z \in L$. Then we have
	\begin{align*}
	\circlearrowleft_{x,y,z}[x,[y,z]_{\delta}]&=[x,\delta[y,z]]	+[y,\delta[z,x]]+[z,\delta[x,y]] \\
	&=[x,[\delta y,z]]+[x,[y,\delta z]]+[y,[\delta z,x]]+[y,[z,\delta x]]+[z,[\delta x,y]]+[z,[x,\delta y]].
	\end{align*}
	Using the Jacobi identity of $(L,[\cdot,\cdot])$, we get
	\begin{equation*}
	\circlearrowleft_{x,y,z}[x,[y,z]_{\delta}]=[\delta x,[y,z]]+[\delta y,[z,x]]+[\delta z,[x,y]].	
	\end{equation*}	
	This completes the proof.
\end{proof}
According to Theorem \ref{thm2.1} and Proposition \ref{prop2.2} we are able to announce the following defintion
\begin{definition}
	An \textbf{ InvDer-Lie algebra }  is a Lie algebra $(L,[\cdot,\cdot])$ equipped with an invertible derivation $\delta$ such that its inverse is a derivation satisfying the following condition
	\begin{align*}
	\circlearrowleft_{x,y,z}[\delta x,[y,z]]=0,
	\end{align*}
	which is called the InvDer-Jacobi identity, and is denoted by $(L,[\cdot,\cdot],\delta)$.
\end{definition}
\textbf{Notaion}: For the rest of the paper we write an \textbf{Inv-derivation} instead of (an invertible derivation  such that its inverse is a derivation) and the set of \textbf{Inv-derivations} of $L$ is denoted by \textbf{InvDer(L)}.\\
\begin{example}
	Let $L$ be a vector space, $\vartheta$ be a graded vector space and $s:\vartheta \rightarrow \vartheta$ be the suspension operator (see \cite{T1} for more details about the operator). We consider the graded vector space $DC^{\star}(L;L):=C^{\star}(L;L) \oplus sC^{\star}(L;L)=\displaystyle \oplus_{n\geq 0}(Hom(\wedge^{n+1}L,L)\times Hom(\wedge^{n}L,L))$. 
	Define a skew-symmetric bracket operation
	\begin{align*}
	\{\cdot,\cdot\}:DC^m(L;L)\otimes DC^n(L;L) \rightarrow DC^{m+n}(L;L),
	\end{align*}
	by 
	\begin{align*}
	\{(f_{m+1,g_m}),(f_{n+1,g_n})\}:=([f_{m+1},f_{n+1}]_{NR},(-1)^{m}[f_{m+1},g_n]_[NR]-(-1)^{n(m+1)}[f_{n+1},g_m]_{NR}),
	\end{align*}
	where $[\cdot,\cdot]_{NR}$ is the Nijenhuis-Richardson bracket. Then $L$ equipped with the Maurer-Cartan element $(w,\delta)$ of the graded Lie algebra $(DC^{\star}(L;L),\{\cdot,\cdot\})$ is an \textbf{InvDer-Lie algebra}, where $w \in Hom(\wedge^2L,L)$ and $\delta \in \textbf{InvDer(L)}$.
\end{example}
\begin{definition}
	Let $(L,\mu)$ be an algebra. A Rota-Baxter operator of weight $(\lambda)$ is a linear map $R:L \rightarrow L$ such that the following identity holds 
	\begin{equation}
	\mu(Rx,Ry)=R\Big(\mu(Rx,y)+\mu(x,Ry)+\lambda \mu(x,y) \Big), \ \ \forall x,y \in L.
	\end{equation}
\end{definition}
In the next of the paper, we consider only Rota-Baxter operator of weight $\lambda=0$. We need the previous operator in many topics like relating several algebraic structures, for example, the passage from Lie algebras to pre-Lie algebras, also from associative algebras to pre-Lie algebras like the following diagram:
\begin{center}
	$\xymatrix{
		\textbf{\emph{associative algebras}} \ar[r]^{\text{endomorphism operator}} \ar[dr]^{\text{R.B.O}}& \textbf{\emph{Lie algebras}} \ar[d]^{\text{R.B.O}}  \\
		&\textbf{\emph{pre-Lie algebras}}   &   
	}$
\end{center}
\begin{example}
	Let $(L,[\cdot,\cdot])$ be a Lie algebra and $R:L \rightarrow L$ be a Rota-Baxter operator of weight $\lambda=0$ such that $R$ is invertible and $R^{-1}$ is a Rota-Baxter operator of the same weight. Then $(L,[\cdot,\cdot],R)$ is an \textbf{InvDer-Lie algebra}.
\end{example}
\begin{example}
	Let $(L,[\cdot,\cdot])$ be an abelian Lie algebra and   $\alpha$ be an isomorphism of $L$. Then $(L,[\cdot,\cdot],\alpha)$ is an \textbf{InvDer-Lie algebra}.
\end{example}
Now, let's see the same result in the case of pre-Lie algebra
\begin{proposition}
	Let $(L,\ast)$ be a pre-Lie algebra and $\delta \in Der(L)$. Then
	\begin{equation} \label{eq2.6}
	\delta x \ast \delta y=\delta^2(x\ast y) \Leftrightarrow \delta \in InvDer(L).
	\end{equation}
\end{proposition}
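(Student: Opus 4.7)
The plan is to mirror the proof of Proposition \ref{prop2.1} almost verbatim, since the identity $\delta x \ast \delta y = \delta^2(x \ast y)$ is the exact pre-Lie analogue of \eqref{eq2.2}. Nowhere in the Lie-algebra argument was the Jacobi identity actually used; only the Leibniz rule for $\delta$, the Leibniz rule for $\delta^{-1}$ (in the converse), and invertibility entered. The same should apply in the pre-Lie setting, where the pre-Lie axiom plays no role and only bilinearity of $\ast$ is needed.

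For the direction $(\Rightarrow)$, I would assume the identity and use the substitutions $x = \delta(\delta^{-1}x)$ and $y = \delta(\delta^{-1}y)$. Plugging these into the hypothesis gives $x \ast y = \delta^2\bigl(\delta^{-1}x \ast \delta^{-1}y\bigr)$; applying $\delta^{-1}$ on both sides reduces this to
\begin{equation*}
\delta^{-1}(x \ast y) = \delta\bigl(\delta^{-1}x \ast \delta^{-1}y\bigr),
\end{equation*}
and a single expansion by the Leibniz rule for $\delta$ then produces $\delta^{-1}(x \ast y) = \delta^{-1}x \ast y + x \ast \delta^{-1}y$, i.e. $\delta^{-1}$ is a derivation.

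For $(\Leftarrow)$, starting from the assumption that $\delta^{-1}$ is a derivation, I would compute $\delta^{-1}(\delta x \ast \delta y)$ directly by the Leibniz rule for $\delta^{-1}$ to obtain $x \ast \delta y + \delta x \ast y$, recognise this as $\delta(x \ast y)$ via the Leibniz rule for $\delta$, and then apply $\delta$ to both sides to conclude $\delta x \ast \delta y = \delta^2(x \ast y)$.

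I do not expect any genuine obstacle: the argument is formally identical to the Lie case, with the bracket $[\cdot,\cdot]$ simply replaced by $\ast$. The one point of caution is that $\ast$ is not skew-symmetric, so the two terms produced by each Leibniz expansion must be recorded in the correct argument positions; this bookkeeping is routine. An alternative route would mimic the remark after Proposition \ref{prop2.1} by expanding $\delta^2(x \ast y) = \delta^2 x \ast y + 2\,\delta x \ast \delta y + x \ast \delta^2 y$ and then splitting the coefficient $2$ via the identity $\delta = \delta^{-1} \circ \delta^2$ before recombining the four terms via the Leibniz rule for $\delta^{-1}$.
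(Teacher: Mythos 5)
Your proof is correct and takes essentially the same route the paper intends: the paper states this proposition without proof precisely because the argument is a verbatim transcription of its proof of Proposition \ref{prop2.1}, which, as you correctly observe, uses only the Leibniz rules for $\delta$ and $\delta^{-1}$ together with invertibility and never the Jacobi (or pre-Lie) identity. Both of your directions check out, including the positional bookkeeping for the non-symmetric product $\ast$.
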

The previous proposition leads us to announce the next theorem.
\begin{theorem}\label{thm2.2}
	Let $(L,\ast)$ be a pre-Lie algebra and $\delta \in \textbf{InvDer(L)}$. Then $(L,\ast_{\delta}=\delta \circ \ast)$ is a pre-Lie algebra, which is called the \textbf{ twisted pre-Lie algebra by an invertible derivation }.
\end{theorem}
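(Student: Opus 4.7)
The plan is to verify the defining axiom of a pre-Lie algebra for the twisted product, i.e.\ that the associator $A_\delta(x,y,z) := (x\ast_\delta y)\ast_\delta z - x\ast_\delta(y\ast_\delta z)$ is symmetric in its first two arguments. The tools available are the Leibniz rule for $\delta$ on $(L,\ast)$, the pre-Lie identity for the original product $\ast$, and the crucial identity $\delta^{2}(x\ast y)=\delta x\ast\delta y$ from equation \eqref{eq2.6} (which holds precisely because $\delta\in\textbf{InvDer}(L)$).

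First I would expand $(x\ast_\delta y)\ast_\delta z = \delta\bigl(\delta(x\ast y)\ast z\bigr)$ by applying the Leibniz rule to the outer $\delta$, obtaining $\delta^{2}(x\ast y)\ast z + \delta(x\ast y)\ast\delta z$, and then apply Leibniz once more to $\delta(x\ast y)$. In parallel I would expand $x\ast_\delta(y\ast_\delta z) = \delta x\ast\delta(y\ast z) + x\ast\delta^{2}(y\ast z)$ and push a further $\delta$ through. At this point the expression contains the troublesome terms $\delta^{2}(x\ast y)\ast z$ and $x\ast\delta^{2}(y\ast z)$, which I would rewrite using \eqref{eq2.6} as $(\delta x\ast\delta y)\ast z$ and $x\ast(\delta y\ast\delta z)$ respectively. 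A clean bookkeeping of the six resulting terms should collapse to
\[
A_\delta(x,y,z) \;=\; A(\delta x,\delta y,z) + A(\delta x,y,\delta z) + A(x,\delta y,\delta z),
\]
where $A(a,b,c):=(a\ast b)\ast c - a\ast(b\ast c)$ is the associator of the original $\ast$.

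The final step is to invoke the pre-Lie identity $A(a,b,c)=A(b,a,c)$ of $(L,\ast)$ on each of the three summands: the first term is manifestly symmetric in $x\leftrightarrow y$ after the swap $\delta x\leftrightarrow\delta y$, while the second and third summands interchange with one another under $x\leftrightarrow y$. Thus $A_\delta(x,y,z)=A_\delta(y,x,z)$, which is exactly the pre-Lie axiom for $\ast_\delta$.

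The main obstacle is not conceptual but combinatorial: one must carefully apply the Leibniz rule twice in each of the two terms and ensure every $\delta^{2}$ is cleared via \eqref{eq2.6} before comparing. Once the associator is written as a sum of three ``original'' associators with $\delta$ inserted in different slots, the symmetry follows termwise from the pre-Lie axiom of $\ast$, with no further use of invertibility of $\delta$ beyond what \eqref{eq2.6} already packages.
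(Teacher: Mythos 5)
Your proposal is correct and follows essentially the same route as the paper: expand both twisted products via the Leibniz rule, clear the $\delta^{2}$ terms using \eqref{eq2.6}, and conclude by the pre-Lie identity of $(L,\ast)$. Your organization of the computation as $A_\delta(x,y,z)=A(\delta x,\delta y,z)+A(\delta x,y,\delta z)+A(x,\delta y,\delta z)$, with the symmetry checked termwise (first summand fixed, the other two exchanged under $x\leftrightarrow y$), is a cleaner bookkeeping of the same term-by-term cancellation the paper carries out.
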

\begin{proof}
	For every $x,y,z \in L$ we have 
	\begin{align*}
	x \ast_{\delta}(y \ast_{\delta}z)&=\delta (x \ast (y\ast_\delta z)) \\
	&=\delta x \ast ( y \ast_{\delta} z)+x \ast \delta^2(y \ast z) \\
	&=\delta x \ast (\delta y \ast z)+ \delta x \ast (y \ast \delta z)+x \ast \delta^2(y \ast z).
	\end{align*}
	Similarly, we get
	\begin{align*}
	(x \ast_{\delta} y)\ast_{\delta} z&=(\delta x \ast y)\ast \delta z+(x \ast \delta y)\ast \delta z+\delta^2(x\ast y)\ast z,\\
	y \ast_{\delta} (x \ast_{\delta} z)&=\delta y \ast (\delta x \ast z)+\delta y \ast (x \ast \delta z)+y \ast \delta^2(x\ast z) \\
	(y \ast_{\delta} x) \ast_{\delta}z&=(\delta y \ast x) \ast \delta z+(y \ast \delta x)\ast \delta z+\delta^2(y \ast x) \ast z.
	\end{align*}
	Since $(L,\ast)$ is a pre-Lie algebra, so 
	\begin{align*}
	x \ast_{\delta}(y \ast_{\delta}z)-(x \ast_{\delta} y)\ast_{\delta} z&=\delta x \ast (\delta y \ast z)+ \delta x \ast (y \ast \delta z)+x \ast \delta^2(y \ast z)\\
	&\ \ \ -(\delta x \ast y)\ast \delta z-(x \ast \delta y)\ast \delta z-\delta^2(x\ast y)\ast z\\
	&=(\delta x \ast (y \ast \delta z)-(\delta x \ast y)\ast \delta z)+(\delta x \ast (\delta y \ast z)\\
	&\ \ \ -(x \ast \delta y)\ast \delta z)+x \ast \delta^2(y \ast z)-\delta^2(x\ast y)\ast z \\
	&=y \ast (\delta x \ast \delta z)-(y \ast \delta x)\ast \delta z+(\delta x \ast \delta y) \ast  z+\delta y \ast (\delta x \ast  z)\\
	&\ \ \ -(\delta y \ast \delta x)\ast  z
	-x\ast (\delta y \ast \delta z)+\delta y \ast (x \ast \delta z)-(\delta y \ast x)\ast \delta z\\
	&\ \ \ +x \ast \delta^2(y \ast z)-\delta^2(x\ast y)\ast z.
	\end{align*}
	From equation \eqref{eq2.6} we have 
	\begin{align*}
	x \ast_{\delta}(y \ast_{\delta}z)-(x \ast_{\delta} y)\ast_{\delta} z&=y \ast \delta^2(x \ast z)-(y \ast \delta x) \ast \delta z+\delta^2(x\ast y) \ast z +\delta y \ast (\delta x \ast z)-\delta^2(y \ast x)\ast z \\
	&\ \ \ -x \ast \delta^2(y \ast z)+\delta y \ast (x \ast \delta z)-(\delta y \ast x) \ast \delta z + x \ast \delta^2(y \ast z)-\delta^2(x\ast y) \ast z \\
	&=y \ast_{\delta}(x \ast_{\delta} z)-(y \ast_{\delta} x)\ast_{\delta}z.
	\end{align*}
	This completes the proof.	
\end{proof}
\begin{proposition} \label{prop2.3}
	Let $(L,\ast)$ be a pre-Lie algebra and $\delta \in$ \textbf{InvDer(L)}. Then the following equation holds 
	\begin{equation}\label{eq2.7}
	\delta x\ast (y \ast z)-(x\ast y)\ast \delta z=\delta y \ast (x\ast z)-(y\ast x)\ast z, \ \ \forall x,y,z \in L.
	\end{equation}
	\begin{proof}
		Let $x,y,z \in L$. Then we have 
		$$x\ast_{\delta}(y \ast_{\delta}z)-(x\ast_{\delta}y)\ast_{\delta}z-y\ast_{\delta}(x\ast_{\delta}z)+(y\ast_{\delta}x)\ast_{\delta}z=0,$$
		and consequently 
		$$\delta \circ \Big(x\ast(y \ast_{\delta}z)-(x\ast_{\delta}y)\ast z-y\ast(x\ast_{\delta}z)+(y\ast_{\delta}x)\ast z \Big)=0.$$
		Since $\delta$ is an invertible derivation, the above equation implies 
		\begin{equation}\label{eq2.8}
		x\ast(y \ast_{\delta}z)-(x\ast_{\delta}y)\ast z-y\ast(x\ast_{\delta}z)+(y\ast_{\delta}x)\ast z=0,
		\end{equation}
		which leads to the following
		\begin{align*}
		0&=x\ast (\delta y\ast z)+x\ast (y\ast \delta z)-(\delta x\ast y)\ast z-(x\ast \delta y)\ast z-y\ast (\delta x\ast z)\\
		&\ \ \ -y\ast (x\ast \delta z)+(\delta y \ast x)\ast z+(y\ast \delta x)\ast z.
		\end{align*}
		Since $(L,\ast)$ is a pre-Lie algebra, we obtain
		\begin{align*}
		&\delta y \ast (x \ast z)-\delta x \ast (y\ast z)+(x\ast y)\ast \delta z-(y\ast x)\ast \delta z=0.
		\end{align*}
		This completes the proof.
	\end{proof}
	According to Theorem \ref{thm2.2} and Proposition \ref{prop2.3} we can announce the following definition.
	\begin{definition}
		An \textbf{ InvDer pre-Lie algebra }$(L,\ast,\delta)$ is a pre-Lie algebra $(L,\ast)$ equipped with $\delta \in$ \textbf{InvDer(L)} satisfying the following equation 
		\begin{equation*}
		\delta x \ast (y\ast z)-(x\ast y)\ast \delta z=\delta y \ast (x \ast z)-(y\ast x)\ast \delta z.	
		\end{equation*}
	\end{definition}
\end{proposition}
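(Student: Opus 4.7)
The plan is to bootstrap from Theorem \ref{thm2.2}: since $(L,\ast_\delta)$ is itself a pre-Lie algebra, the pre-Lie identity
\begin{equation*}
x\ast_\delta(y\ast_\delta z)-(x\ast_\delta y)\ast_\delta z = y\ast_\delta(x\ast_\delta z)-(y\ast_\delta x)\ast_\delta z
\end{equation*}
holds for all $x,y,z\in L$. Because $\ast_\delta=\delta\circ\ast$, every term of this identity has $\delta$ as its outermost operation, so one $\delta$ can be pulled out of the whole equation. Since $\delta$ is invertible (hence $\ker\delta=0$), this outer $\delta$ strips off cleanly, yielding
\begin{equation*}
x\ast(y\ast_\delta z)-(x\ast_\delta y)\ast z - y\ast(x\ast_\delta z)+(y\ast_\delta x)\ast z = 0.
\end{equation*}

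Next I would expand the remaining inner $\ast_\delta$'s using $\ast_\delta=\delta\circ\ast$ and apply the Leibniz rule for the derivation $\delta$. This produces eight terms in the original product $\ast$, each carrying exactly one $\delta$. Grouping them into the pre-Lie associators $x\ast(\delta y\ast z)-(x\ast\delta y)\ast z$ and $y\ast(\delta x\ast z)-(y\ast\delta x)\ast z$, the original pre-Lie identity rewrites these as $\delta y\ast(x\ast z)-(\delta y\ast x)\ast z$ and $\delta x\ast(y\ast z)-(\delta x\ast y)\ast z$ respectively, migrating $\delta$ to the outermost slot. The cross terms $(\delta x\ast y)\ast z$ and $(\delta y\ast x)\ast z$ then cancel in pairs, leaving
\begin{equation*}
\delta x\ast(y\ast z)-\delta y\ast(x\ast z) = x\ast(y\ast \delta z)-y\ast(x\ast \delta z).
\end{equation*}
A final application of the pre-Lie identity to the right-hand side converts it into $(x\ast y)\ast\delta z-(y\ast x)\ast\delta z$, producing the asserted equation \eqref{eq2.7}.

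The main obstacle is the bookkeeping after the Leibniz expansion: eight similar-looking terms must be paired correctly as pre-Lie associators, and one must keep track of signs so that exactly the right cross terms cancel. A systematic strategy is to sort the eight terms by which variable carries the $\delta$, invoke the original pre-Lie identity precisely where $\delta x$ or $\delta y$ appears in the middle slot, and reserve the $\delta z$ terms for a single clean-up step at the end. Beyond this combinatorial aspect the argument uses only the pre-Lie identity of $\ast$, the Leibniz rule for $\delta$, and the injectivity coming from invertibility; none of these is subtle individually, so the heart of the proof is really just making the eight-term rearrangement unambiguous.
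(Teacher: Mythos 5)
Your proposal is correct and follows essentially the same route as the paper: start from the pre-Lie identity for $\ast_{\delta}$ guaranteed by Theorem \ref{thm2.2}, strip the outer $\delta$ by injectivity, expand the inner $\ast_{\delta}$'s via the Leibniz rule into eight terms, and regroup them using the pre-Lie identity of $\ast$. The only difference is expository (you spell out the grouping strategy more explicitly), and your final form matches \eqref{eq2.7} with the $\delta z$ in the last term, which the paper's statement omits by an evident typo but restores in the subsequent definition.
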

Let $(L,[\cdot,\cdot])$ and $(L,\ast)$ be a Lie algebra and a pre-Lie algebra, respectively. The relation between both of them (from Lie pre-Lie algebra to Lie algebra) is satisfied by the commutator 
\begin{equation}\label{eq2.9}
[x,y]_C=x \ast y -y \ast x.
\end{equation}
In the next proposition we will investigate this relation with the \textbf{ twisted Lie structure by an invertible derivation}.
\begin{proposition}
	Let $(L,\ast_{\delta})$ be a \textbf{ twisted pre-Lie algebra by an ivertible derivation}. Then a Lie algebra rises via the commutator \eqref{eq2.9} which its structure is given by 
	\begin{equation} \label{eq2.10}
	[x,y]_C=x\ast_{\delta} y-y \ast_{\delta} x, \ \ \forall x,y \in L.
	\end{equation} 
\end{proposition}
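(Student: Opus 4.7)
The plan is to reduce the statement to the well-known classical construction that produces a Lie algebra from any pre-Lie algebra by taking the commutator. Since Theorem~\ref{thm2.2} already establishes that $(L,\ast_{\delta})$ is an honest pre-Lie algebra, the remaining work is merely to verify the two Lie axioms for $[\cdot,\cdot]_C$ directly from the pre-Lie identity for $\ast_{\delta}$.

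First I would verify skew-symmetry, which is immediate from the definition: $[x,y]_C = x\ast_{\delta} y - y\ast_{\delta} x = -(y\ast_{\delta} x - x\ast_{\delta} y) = -[y,x]_C$. Next I would tackle the Jacobi identity. The natural step is to introduce the associator
\begin{equation*}
a_{\delta}(u,v,w) := u\ast_{\delta}(v\ast_{\delta}w) - (u\ast_{\delta}v)\ast_{\delta}w,
\end{equation*}
and note that the pre-Lie identity in $\ast_\delta$ provided by Theorem~\ref{thm2.2} is exactly the symmetry $a_{\delta}(x,y,z) = a_{\delta}(y,x,z)$ in the first two arguments.

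Then I would expand
\begin{equation*}
\circlearrowleft_{x,y,z}[x,[y,z]_C]_C = \circlearrowleft_{x,y,z}\bigl(x\ast_{\delta}(y\ast_{\delta}z) - x\ast_{\delta}(z\ast_{\delta}y) - (y\ast_{\delta}z)\ast_{\delta}x + (z\ast_{\delta}y)\ast_{\delta}x\bigr),
\end{equation*}
regroup the twelve resulting terms into six associators, and apply the symmetry $a_{\delta}(u,v,w)=a_{\delta}(v,u,w)$ to cancel them in pairs. This is the standard computation showing that the commutator of a pre-Lie product is a Lie bracket; the cyclic sum collapses to zero because every term of the form $a_{\delta}(u,v,w)$ is matched by a term $-a_{\delta}(v,u,w)$.

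The main obstacle is essentially bookkeeping rather than anything conceptual, since all the hard work has already been done in Theorem~\ref{thm2.2}. One could equivalently shorten the argument by simply invoking the classical theorem that any pre-Lie algebra is Lie-admissible and pointing to Theorem~\ref{thm2.2} to supply the pre-Lie structure on $(L,\ast_{\delta})$.
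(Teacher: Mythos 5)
Your proposal is correct and follows essentially the same route as the paper: verify skew-symmetry directly from the definition, expand the cyclic sum of $[x,[y,z]_C]_C$ into the twelve $\ast_{\delta}$-terms, and cancel them using the pre-Lie identity for $\ast_{\delta}$ supplied by Theorem~\ref{thm2.2}. Your reformulation via the symmetry of the associator $a_{\delta}$ is just a cleaner bookkeeping of the same cancellation the paper performs.
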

\begin{proof}
	Let $x,y,z \in L$. Then we have
	\begin{equation*}
	[x,y]_C=x\ast_{\delta} y-y \ast_{\delta} x=-(y \ast_{\delta} x-x\ast_{\delta} y)=-[y,x]_C.
	\end{equation*}
	So the skew-symmetry holds. Now, we prove the Jacobi identity as follows:
	\begin{align*}
	[x,[y,z]_C]_C&=x\ast_{\delta} [y,z]_C-[y,z]_C \ast_{\delta}  x \\
	&=x\ast_{\delta} (y \ast_{\delta} z-z \ast_{\delta} y)-(y \ast_{\delta} z-z \ast_{\delta} y) \ast_{\delta} x \\
	&=x\ast_{\delta} (y \ast_{\delta} z)-x\ast_{\delta}(z \ast_{\delta} y)-(y \ast_{\delta} z)\ast_{\delta}x+(z \ast_{\delta} y) \ast_{\delta} x.
	\end{align*}
	Similarly 
	\begin{align*}
	[y,[z,x]_C]_C&=y \ast_{\delta} (z \ast_{\delta}x)-y\ast_{\delta}(x \ast_{\delta}z)-(z\ast_{\delta}x)\ast_{\delta}y+(x\ast_{\delta}z)\ast_{\delta}y, \\
	[z,[x,y]_C]_C&=z \ast_{\delta}(x\ast_{\delta}y)-z\ast_{\delta}(y\ast_{\delta}x)-(x\ast_{\delta}y)\ast_{\delta}z+(y\ast_{\delta}x)\ast_{\delta}z.
	\end{align*}
Since $(L,\ast_{\delta})$ is a \textbf{ twisted pre-Lie algebra by an invertible derivation}, we obtain 
	\begin{equation}\label{eq2.11}
	\circlearrowleft_{x,yz}[x,[y,z]_C]_C=0.
	\end{equation}
	This completes the proof.
\end{proof}
\section{InvDer Lie and InvDer pre-Lie algebras and Rota-Baxter operators}
In the next proposition, we will investigate the passage from \textbf{InvDer pre-Lie algebra} to a \textbf{InvDer Lie algebra} via the commutator \eqref{eq2.9}.
\begin{proposition}
	Let $(L,\ast,\delta)$ be an \textbf{InvDer pre-Lie algebra}. Then $(L,[\cdot,\cdot]_C,\delta)$ is an \textbf{InvDer Lie algebra}, where 
	\begin{equation*}
	[x,y]_C=x \ast y-y\ast x, \text{ for all }x,y \in L.
	\end{equation*}
\end{proposition}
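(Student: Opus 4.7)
The plan is to verify directly the three defining ingredients of an InvDer Lie algebra for the triple $(L, [\cdot,\cdot]_C, \delta)$: (i) that $[\cdot,\cdot]_C$ is a Lie bracket, (ii) that $\delta$ is an Inv-derivation with respect to $[\cdot,\cdot]_C$, and (iii) that the InvDer-Jacobi identity $\circlearrowleft_{x,y,z}[\delta x,[y,z]_C]_C=0$ holds. My strategy is to reduce (iii) to Proposition \ref{prop2.2} and the Remark preceding it, once (i) and (ii) are in place, so that no further hand computation with the pre-Lie identity is needed.

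For (i), skew-symmetry is immediate from the definition $[x,y]_C=x\ast y-y\ast x$. The Jacobi identity for $[\cdot,\cdot]_C$ is the classical fact that the commutator of a pre-Lie product is a Lie bracket: one expands $\circlearrowleft_{x,y,z}[x,[y,z]_C]_C$ into twelve terms, regroups them into three differences of the form $x\ast(y\ast z)-(x\ast y)\ast z-y\ast(x\ast z)+(y\ast x)\ast z$, and each such difference vanishes by the pre-Lie (left-symmetry) axiom.

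For (ii), I would exploit the fact that $\delta$ is a derivation of $\ast$. A direct computation gives
\begin{align*}
\delta[x,y]_C&=\delta(x\ast y)-\delta(y\ast x)\\
&=\delta x\ast y+x\ast\delta y-\delta y\ast x-y\ast\delta x\\
&=[\delta x,y]_C+[x,\delta y]_C,
\end{align*}
so $\delta$ is a derivation of $[\cdot,\cdot]_C$. Running the same computation with $\delta^{-1}$ in place of $\delta$, and using that $\delta^{-1}$ is by hypothesis a derivation of $\ast$, shows that $\delta^{-1}$ is a derivation of $[\cdot,\cdot]_C$ as well. Combined with the invertibility of $\delta$, this yields $\delta\in\textbf{InvDer}(L,[\cdot,\cdot]_C)$.

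For (iii), having established that $(L,[\cdot,\cdot]_C)$ is a Lie algebra carrying the Inv-derivation $\delta$, I would apply Theorem \ref{thm2.1} to conclude that $[\cdot,\cdot]_{C,\delta}:=\delta\circ[\cdot,\cdot]_C$ also gives a Lie bracket, whence $\circlearrowleft_{x,y,z}[x,[y,z]_{C,\delta}]_{C,\delta}=0$; by the kernel argument of the Remark following Theorem \ref{thm2.1} (using $\ker\delta=0$), this descends to $\circlearrowleft_{x,y,z}[x,[y,z]_{C,\delta}]_C=0$. Finally, Proposition \ref{prop2.2}, read in the Lie algebra $(L,[\cdot,\cdot]_C)$, rewrites this cyclic sum as $\circlearrowleft_{x,y,z}[\delta x,[y,z]_C]_C=0$, which is exactly the InvDer-Jacobi identity. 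The only real obstacle I anticipate is keeping the notation straight between the original bracket $[\cdot,\cdot]_C$ and its $\delta$-twist $[\cdot,\cdot]_{C,\delta}$, since all the nontrivial work has already been done in Section 2 and can be reused verbatim.
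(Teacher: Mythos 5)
Your proof is correct, but for the key step --- the InvDer--Jacobi identity --- you take a genuinely different route from the paper. The paper proves $\circlearrowleft_{x,y,z}[\delta x,[y,z]_C]_C=0$ by brute force: it expands each $[\delta x,[y,z]_C]_C$ into four pre-Lie products, sums the twelve resulting terms cyclically, and cancels them using the defining identity of an InvDer pre-Lie algebra, $\delta x\ast(y\ast z)-(x\ast y)\ast\delta z=\delta y\ast(x\ast z)-(y\ast x)\ast\delta z$. You instead first establish (as the paper also does) that $\delta$ and $\delta^{-1}$ are derivations of $[\cdot,\cdot]_C$, then reuse the Section 2 machinery: Theorem \ref{thm2.1} applied to the Lie algebra $(L,[\cdot,\cdot]_C)$, the kernel argument of the subsequent Remark, and Proposition \ref{prop2.2} together yield the InvDer--Jacobi identity with no further computation. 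This is legitimate and arguably cleaner; it also makes explicit two things the paper leaves implicit: first, that the Jacobi identity for the commutator bracket itself must be checked (you supply the classical left-symmetry cancellation, which the paper skips), and second, that the extra InvDer pre-Lie identity in the hypothesis is never actually needed --- your argument uses only that $(L,\ast)$ is pre-Lie and $\delta\in\textbf{InvDer}(L)$, which is consistent with the paper's own Proposition \ref{prop2.3} showing that identity is automatic. The trade-off is that your proof inherits any defect in the Theorem \ref{thm2.1}--Remark--Proposition \ref{prop2.2} chain (e.g.\ the sign in Proposition \ref{prop2.2} is stated as $+$ but the Jacobi rearrangement actually produces $-$; since the cyclic sum is being equated to zero this is harmless), whereas the paper's direct cancellation is self-contained.
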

\begin{proof}
	Let $x,y,z \in L$. It is clear that $[\cdot,\cdot]_C$ is a skew symmetric operation. Let's focus on proving the InvDer-Jacobi identity as follows: 
	\begin{align*}
	[\delta x,[y,z]_C]_C&=\delta x \ast [y,z]_C-[y,z]_C\ast \delta x \\
	&=\delta x\ast(y\ast z-z\ast y)-(y\ast z-z\ast y)\ast \delta x \\
	&=\delta x\ast(y\ast z)-\delta x\ast (z\ast y)-(y\ast z)\ast \delta x+(z\ast y)\ast \delta x.
	\end{align*}
	Similarly, we obtain 
	\begin{align*}
	[\delta y,[z,x]_C]_C&=\delta y\ast(z\ast x)-\delta y\ast (x\ast z)-(z\ast x)\ast \delta y+(x\ast z)\ast \delta y,\\
	[\delta z,[x,y]_C]_C&=\delta z\ast(x\ast y)-\delta z\ast (y\ast x)-(x\ast y)\ast \delta z+(y\ast x)\ast \delta z.
	\end{align*}
	Since $(L,\ast,\delta)$ is an \textbf{InvDer pre-Lie algebra}, we obtain  
	\begin{align*}
	\circlearrowleft_{x,y,z}[\delta x,[y,z]_C]_C&=\delta x\ast(y\ast z)-\delta x\ast (z\ast y)-(y\ast z)\ast \delta x+(z\ast y)\ast \delta x \\
	&+\delta y\ast(z\ast x)-\delta y\ast (x\ast z)-(z\ast x)\ast \delta y+(x\ast z)\ast \delta y \\
	&+\delta z\ast(x\ast y)-\delta z\ast (y\ast x)-(x\ast y)\ast \delta z+(y\ast x)\ast \delta z \\
	&=0.
	\end{align*}
	So now, we need just to prove that $\delta$ and $\delta^{-1}$ are both derivations on $(L,[\cdot,\cdot])$. We have
	\begin{align*}
	\delta ([x,y]_C)&=\delta (x\ast y-y\ast x) \\
	&=\delta x \ast y+x \ast \delta y-\delta y\ast x-y\ast \delta x \\
	&=\delta x \ast y-y\ast \delta x+x \ast \delta y-\delta y\ast x \\
	&=[\delta x,y]_C+[x,\delta y]_C,
	\end{align*}
	i.e., $\delta$ is a derivation. In the similar way, we can see that $\delta^{-1}$ is a derivation. This completes the proof.
\end{proof}
In the next proposition we consider a Rota-Baxter of weight ($\lambda=0$) and use it to verify the passage from \textbf{InvDer Lie algebra} to \textbf{InvDer pre-Lie algebra}.
\begin{proposition}
	Let $(L,[\cdot,\cdot],\delta)$ be an \textbf{InvDer Lie algebra}, $R$ is a Rota-Baxter operator and suppose that \begin{equation}\label{eq3.1}
	\delta \circ R=R\circ \delta \text{ and } \delta^{-1} \circ R=R\circ \delta^{-1}.
	\end{equation} 
	Then $(L,\ast,\delta)$ is an \textbf{InvDer pre-Lie algebra}, where 
	\begin{equation}\label{eq3.2}
	x\ast y=[Rx,y], \ \ \ \forall x,y \in L.
	\end{equation}
\end{proposition}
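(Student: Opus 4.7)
The plan is to verify the three requirements for $(L,\ast,\delta)$ to be an \textbf{InvDer pre-Lie algebra}: that $(L,\ast)$ is pre-Lie, that $\delta$ is an Inv-derivation of $\ast$, and that the InvDer pre-Lie identity holds. All three steps proceed by expanding $x\ast y=[Rx,y]$ and then applying the Rota-Baxter identity $[Ru,Rv]=R([Ru,v]+[u,Rv])$ together with the commutation \eqref{eq3.1}.

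For the pre-Lie axiom, this is essentially the classical fact that a Rota-Baxter operator of weight zero on a Lie algebra produces a pre-Lie product. Explicitly, I would write the associator
\begin{equation*}
(x\ast y)\ast z - x\ast(y\ast z) = [R[Rx,y],z] - [Rx,[Ry,z]],
\end{equation*}
use the Rota-Baxter relation to rewrite $R[Rx,y]=[Rx,Ry]-R[x,Ry]$, and expand $[Rx,[Ry,z]]$ via the Jacobi identity of $[\cdot,\cdot]$. Subtracting the $(x\leftrightarrow y)$-swapped associator, the asymmetric pieces cancel and pre-Lie symmetry follows.

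For the Inv-derivation condition on $\delta$, the calculation
\begin{equation*}
\delta(x\ast y)=\delta[Rx,y]=[\delta Rx,y]+[Rx,\delta y]=[R\delta x,y]+[Rx,\delta y]=\delta x\ast y+x\ast\delta y
\end{equation*}
uses $\delta R=R\delta$; the identical computation with $\delta^{-1}$ and $\delta^{-1}R=R\delta^{-1}$ shows $\delta^{-1}$ is also a derivation of $\ast$, so $\delta\in\textbf{InvDer}(L,\ast)$.

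The substantive step is the InvDer pre-Lie identity. I would form
\begin{equation*}
E:=\bigl(\delta x\ast(y\ast z)-(x\ast y)\ast\delta z\bigr)-\bigl(\delta y\ast(x\ast z)-(y\ast x)\ast\delta z\bigr)
\end{equation*}
and expand, using $\delta R=R\delta$, to
\begin{equation*}
E=[\delta Rx,[Ry,z]]-[\delta Ry,[Rx,z]]+[R[Ry,x]-R[Rx,y],\delta z].
\end{equation*}
Two applications of the Rota-Baxter identity collapse $R[Ry,x]-R[Rx,y]$ to $-[Rx,Ry]$, giving
\begin{equation*}
E=[\delta Rx,[Ry,z]]-[\delta Ry,[Rx,z]]-[[Rx,Ry],\delta z],
\end{equation*}
which vanishes by the InvDer-Jacobi identity $\circlearrowleft_{u,v,w}[\delta u,[v,w]]=0$ applied to $(u,v,w)=(Rx,Ry,z)$. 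The main obstacle is keeping the sign bookkeeping correct in the Rota-Baxter reduction; once the two $R$-terms collapse to $-[Rx,Ry]$, the match with the InvDer-Jacobi identity is exact.
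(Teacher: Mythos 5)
Your proof is correct and follows essentially the same route as the paper's: both reduce the InvDer pre-Lie identity to the InvDer-Jacobi identity applied to the triple $(Rx,Ry,z)$ after collapsing $R[Ry,x]-R[Rx,y]$ to $-[Rx,Ry]$ via the Rota-Baxter relation, and both verify that $\delta$ and $\delta^{-1}$ are derivations of $\ast$ using the commutation \eqref{eq3.1}. The only difference is that you additionally verify the underlying pre-Lie axiom for $(L,\ast)$ (the classical weight-zero Rota-Baxter construction), a step the paper's proof leaves implicit even though the definition of an \textbf{InvDer pre-Lie algebra} requires it.
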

\begin{proof}
	Let $x,y,z \in L$. Using \eqref{eq3.1} we have
	\begin{align*}
	\delta x \ast (y\ast z)=[R\circ \delta x,y\ast z] =[R\circ \delta,[Ry,z]]=[\delta \circ Rx,[Ry,z]].
	\end{align*}
	Using the InvDer-Jacobi identity, we obtain 
	\begin{equation*}
	\delta x \ast (y\ast z)=-[\delta \circ Ry,[z,Rx]]+[[Rx,Ry],\delta z].
	\end{equation*}
	On the other side, we have 
	\begin{align*}
	(x\ast y)\ast \delta z&=[R(x\ast y),\delta z]=[R[Rx,y],\delta z].
	\end{align*}
	Now, we get
	\begin{align*}
	\delta x \ast (y\ast z)-(x\ast y)\ast \delta z&=-[\delta \circ Ry,[z,Rx]]+[[Rx,Ry],\delta z]-[R[Rx,y],\delta z].
	\end{align*}
	Using the identity of Rota-Baxter operator and \eqref{eq3.1} in the above equation we deduce that
	\begin{align*}
	\delta x \ast (y\ast z)-(x\ast y)\ast \delta z&=-[\delta \circ Ry,[z,Rx]]+[R[x,Ry],\delta z] \\
	&=[\delta \circ Ry,[Rx,z]]-[R[Ry,x],\delta z] \\
	&=[R\circ \delta y,[Rx,z]]-[R[Ry,x],\delta z] \\ 
	&=\delta y\ast(x\ast z)-(y\ast x)\ast \delta z.
	\end{align*}
	Now, we prove that $\delta \text{ and } \delta^{-1}$ are both derivations on $(L,\ast)$. Using \eqref{eq3.1} we get
	\begin{align*}
	\delta (x\ast y)&=\delta([Rx,y])=[\delta \circ Rx,y]+[Rx,\delta y]=[R\circ \delta x,y]+[Rx,\delta y] =\delta x\ast y+x\ast \delta y.
	\end{align*}
	So $\delta$ is a derivation. In the similar way, we deduce that $\delta^{-1}$ is a derivation. This completes the proof.
\end{proof}
In the next of this section, we construct an \textbf{ InvDer associative algebra} by twisting an associative algebra by an invertible derivation. Let's first start with the following proposition in which we study the impact of $\delta \in \textbf{InvDer(L)}$.
\begin{proposition}
	Let $(L,\mu)$ be an associative algebra and $\delta \in Der(L)$. Then 
	\begin{equation} \label{eq3.3}
	\delta^2(\mu(x,y))=\mu(\delta x,\delta y)\Leftrightarrow \delta \in \textbf{ InvDer(L)}, \text{ for all } x,y \in L.
	\end{equation}
\end{proposition}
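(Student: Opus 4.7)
The plan is to transport the argument of Proposition \ref{prop2.1} almost verbatim, since that proof never used the skew-symmetry of the bracket nor the Jacobi identity: it relied only on the Leibniz rule and on $\delta\circ\delta^{-1}=id$. The associative analogue should therefore go through with $\mu$ in place of $[\cdot,\cdot]$, the only thing to watch being that the two arguments of $\mu$ are no longer interchangeable.

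For the forward implication, I would assume $\delta^2(\mu(x,y))=\mu(\delta x,\delta y)$ for all $x,y\in L$ and substitute $\delta^{-1}x$ for $x$ and $\delta^{-1}y$ for $y$, obtaining $\delta^2(\mu(\delta^{-1}x,\delta^{-1}y))=\mu(x,y)$. Applying $\delta^{-1}$ to both sides gives $\delta(\mu(\delta^{-1}x,\delta^{-1}y))=\delta^{-1}(\mu(x,y))$. Since $\delta$ is already a derivation, the left-hand side expands as $\mu(\delta\delta^{-1}x,\delta^{-1}y)+\mu(\delta^{-1}x,\delta\delta^{-1}y)=\mu(x,\delta^{-1}y)+\mu(\delta^{-1}x,y)$, which is precisely the Leibniz rule for $\delta^{-1}$.

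For the reverse implication, I would assume $\delta^{-1}\in\textbf{InvDer(L)}$ and compute directly, mimicking the remark after Proposition \ref{prop2.1}. Starting from $\delta^2(\mu(x,y))=\delta\bigl(\mu(\delta x,y)+\mu(x,\delta y)\bigr)$, I insert $\delta^{-1}\circ\delta$ in the slots that are not already touched by $\delta$, namely $\mu(\delta x,y)+\mu(x,\delta y)=\mu(\delta x,\delta^{-1}\delta y)+\mu(\delta^{-1}\delta x,\delta y)$, and then recognize the right-hand side as $\delta^{-1}(\mu(\delta x,\delta y))$ by the Leibniz rule for $\delta^{-1}$. Applying $\delta$ and using $\delta\circ\delta^{-1}=id$ then yields $\delta^2(\mu(x,y))=\mu(\delta x,\delta y)$, as required.

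There is no real obstacle: both halves are essentially one-line derivations once one decides where to insert $\delta^{-1}\circ\delta$. The only pitfall is bookkeeping, since the non-commutativity of $\mu$ means that one must keep the first and second slots of $\mu$ strictly aligned, whereas the Lie-algebra version of the argument could tolerate a sign swap. Because the Leibniz rule itself does not require any symmetry, this pitfall is merely a notational one and does not change the structure of the proof.
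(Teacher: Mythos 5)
Your argument is correct and is essentially the paper's own: the paper omits a proof of this proposition precisely because it is the verbatim transport of the proof of Proposition \ref{prop2.1} (and the remark following it) from the bracket to $\mu$, which is exactly what you carry out, including the same insertions of $\delta^{-1}\circ\delta$ in each slot. Your observation that neither skew-symmetry nor the Jacobi identity is used, so only slot bookkeeping changes, is the right justification for the transfer.
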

\begin{theorem} \label{thm3.4}
	Let $(L,\mu)$ be an associative algebra and $\delta \in \textbf{ InvDer(L)}$. Then $(L,\mu_{\delta}=\delta \circ \mu)$ is an associative algebra which is denoted by \textbf{ twisted associative algebra by an invertible derivation}.
\end{theorem}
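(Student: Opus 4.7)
The plan is to expand both sides of the desired associativity $\mu_\delta(\mu_\delta(x,y),z) = \mu_\delta(x,\mu_\delta(y,z))$ using $\mu_\delta = \delta\circ\mu$, apply the Leibniz rule for $\delta$ twice, and then use the key identity $\delta^2\mu(x,y) = \mu(\delta x,\delta y)$ from \eqref{eq3.3} together with the associativity of $\mu$ to match terms pairwise.

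First I would write
\[
\mu_\delta(\mu_\delta(x,y),z) = \delta\bigl(\mu(\delta\mu(x,y),z)\bigr)
\]
and apply the Leibniz rule, producing one term containing $\delta^2\mu(x,y)$ in the first slot and another with $\delta\mu(x,y)$ paired against $\delta z$. By \eqref{eq3.3} the first becomes $\mu(\mu(\delta x,\delta y),z)$, while expanding $\delta\mu(x,y)$ by Leibniz splits the second into $\mu(\mu(\delta x,y),\delta z)+\mu(\mu(x,\delta y),\delta z)$.

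Symmetrically, I would expand $\mu_\delta(x,\mu_\delta(y,z)) = \delta\bigl(\mu(x,\delta\mu(y,z))\bigr)$ into the three mirror terms $\mu(\delta x,\mu(\delta y,z))+\mu(\delta x,\mu(y,\delta z))+\mu(x,\mu(\delta y,\delta z))$, where the last uses \eqref{eq3.3} on $\delta^2\mu(y,z)$. Matching the two triples pairwise via associativity of $\mu$ (left-bracketings versus right-bracketings of the triples $(\delta x,\delta y,z)$, $(\delta x,y,\delta z)$, and $(x,\delta y,\delta z)$) yields equality.

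The main obstacle is purely bookkeeping: keeping track of the six resulting terms and noting that the only nontrivial ingredient beyond the Leibniz rule is the identity $\delta^2\mu(x,y)=\mu(\delta x,\delta y)$. This is precisely the point where the hypothesis $\delta\in\textbf{InvDer(L)}$ (that $\delta^{-1}$ is also a derivation) enters the proof, via the equivalence \eqref{eq3.3}; without it, one would be left with the unwanted cross term $\mu(\delta^2 x, y) + \mu(x, \delta^2 y)$ and associativity would fail.
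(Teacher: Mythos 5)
Your proposal is correct and takes essentially the same approach as the paper: expand via the Leibniz rule, apply the identity $\delta^2\mu(x,y)=\mu(\delta x,\delta y)$ from \eqref{eq3.3}, and match the three left-bracketed terms against the three right-bracketed ones using associativity of $\mu$. The only cosmetic difference is that the paper carries out the computation as a single chain starting from $\mu_{\delta}(x,\mu_{\delta}(y,z))$ rather than expanding both sides separately.
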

\begin{proof}
	Let $x,y,z \in L$. Using \eqref{eq3.3} we get
	\begin{align*}
	\mu_{\delta}(x,\mu_{\delta}(y,z))&=\delta \circ \mu(x,\mu_{\delta}(y,z)) \\
	&=\mu(\delta x,\mu_{\delta}(y,z))+\mu(x,\delta \circ \mu_{\delta}(y,z)) \\
	&=\mu(\delta x,\mu(\delta y,z))+\mu(\delta x,\mu(y,\delta z))+\mu(x,\delta^2\mu(y,z)) \\
	&=\mu(\delta x,\mu(\delta y,z))\mu(\delta x,\mu(y,\delta z))+\mu(x,\mu(\delta y,\delta z)) \\
	&=\mu(\mu(\delta x,\delta y),z)+\mu(\mu(\delta x,y),\delta z)+\mu(\mu(x,\delta y),\delta z) \\
	&=\mu_{\delta}(\mu_{\delta}(x,y),z).
	\end{align*}
	This completes the proof.
\end{proof}
\begin{proposition}
	Let $(L,\mu)$ be an associative algebra and $\delta \in \textbf{ InvDer(L)}$. Then the following identity holds 
	\begin{equation} \label{eq3.4}
	\mu(\delta x,\mu(y,z))=\mu(\mu(x,y),\delta z).
	\end{equation}
\end{proposition}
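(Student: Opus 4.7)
The plan is to exploit the associativity of the twisted product $\mu_\delta = \delta \circ \mu$, established in Theorem \ref{thm3.4}, together with the derivation property of $\delta$ and the original associativity of $\mu$.

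First, I would write down the associativity of $\mu_\delta$, namely
\begin{equation*}
\mu_\delta(x, \mu_\delta(y,z)) = \mu_\delta(\mu_\delta(x,y), z),
\end{equation*}
and unfold the definition $\mu_\delta = \delta \circ \mu$ on both sides. This yields
\begin{equation*}
\delta\bigl(\mu(x, \delta\mu(y,z))\bigr) = \delta\bigl(\mu(\delta\mu(x,y), z)\bigr).
\end{equation*}
Since $\delta$ is invertible, I can strip off the outer $\delta$ to obtain
\begin{equation*}
\mu(x, \delta\mu(y,z)) = \mu(\delta\mu(x,y), z). \tag{$\ast$}
\end{equation*}

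Next, I would apply the derivation property of $\delta$ on each side of $(\ast)$, replacing $\delta\mu(y,z)$ by $\mu(\delta y,z) + \mu(y,\delta z)$ and $\delta\mu(x,y)$ by $\mu(\delta x,y) + \mu(x,\delta y)$. Expanding gives
\begin{equation*}
\mu(x, \mu(\delta y,z)) + \mu(x, \mu(y,\delta z)) = \mu(\mu(\delta x,y), z) + \mu(\mu(x,\delta y), z).
\end{equation*}
By the associativity of $\mu$, the terms $\mu(x,\mu(\delta y,z))$ and $\mu(\mu(x,\delta y),z)$ are equal, so they cancel, leaving
\begin{equation*}
\mu(x, \mu(y,\delta z)) = \mu(\mu(\delta x,y), z).
\end{equation*}
Finally, I would apply the associativity of $\mu$ once more to rewrite the left side as $\mu(\mu(x,y), \delta z)$ and the right side as $\mu(\delta x, \mu(y,z))$, which gives exactly \eqref{eq3.4}.

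The only real content in this argument is the step $(\ast)$ stripped of $\delta$, where I need both invertibility of $\delta$ and the already-proved associativity of $\mu_\delta$; the rest is bookkeeping with the Leibniz rule and associativity. Thus I do not expect a serious obstacle, but care must be taken to use only the hypotheses available: the derivation property of $\delta$ (not $\delta^{-1}$) and associativity of $\mu$ suffice, so the identity \eqref{eq3.3} is not directly required here — it is already encoded in Theorem \ref{thm3.4}.
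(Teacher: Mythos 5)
Your proposal is correct and follows essentially the same route as the paper: both start from the associativity of $\mu_{\delta}$ proved in Theorem \ref{thm3.4}, cancel the outer $\delta$ using its injectivity, expand with the Leibniz rule so that the terms $\mu(x,\mu(\delta y,z))$ and $\mu(\mu(x,\delta y),z)$ cancel by associativity, and then reassociate to obtain \eqref{eq3.4}. Your write-up merely makes explicit the expansion-and-cancellation step that the paper compresses into a single line.
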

\begin{proof}
	From Theorem \eqref{thm3.4} we have
	\begin{equation*}
	\mu_{\delta}(x,\mu_{\delta}(y,z))=\mu_{\delta}(\mu_{\delta}(x,y),z), \ \ \ \forall x, y, z\in L. 	
	\end{equation*}
	So, we get
	\begin{align*}
	\delta\Big(\mu(x,\mu_{\delta}(y,z))-\mu(\mu_{\delta}(x,y),z) \Big)=0.
	\end{align*}
	Since $\delta \in \textbf{InvDer(L)}$, then $Ker(\delta)=\{0\}$ and consequently 
	$$ \mu(x,\mu_{\delta}(y,z))-\mu(\mu_{\delta}(x,y),z)=0.$$
	The above equation gives us
	$$\mu(x,\mu(y,\delta z))-\mu(\mu(\delta x,y),z)=0.$$
	Since $(L,\mu)$ is an associative algebra, we deduce that
	$\mu(\delta x,\mu(y,z))=\mu(\mu(x,y),\delta z)$, which completes the proof.
\end{proof}
With the previous results we can define \textbf{ InvDer associative algebras} in the following:
\begin{definition}
	An \textbf{ InvDer associative algebras} $(L,\mu,\delta)$ is an associative algebras $(L,\mu)$ equipped with $\delta \in \textbf{ InvDer(L)}$ satisfying the following equation 
	\begin{equation*}
	\mu(\delta x,\mu(y,z))=\mu(\mu(x,y),\delta z), \ \ \forall  x,y,z \in L.
	\end{equation*}
\end{definition}
Now, let's investigate the link between \textbf{ InvDer Lie and associative algebras} in the next proposition.
\begin{proposition}
	Let $(L,\mu,\delta)$ be an \textbf{ InvDer associative algebra }. Then $(L,[\cdot,\cdot]_C,\delta)$ is an \textbf{ InvDer Lie algebra}, where 
	\begin{equation} \label{eq3.5}
	[x,y]_C=\mu(x,y)-\mu(y,x), \ \ \forall x,y \in L.
	\end{equation}  
\end{proposition}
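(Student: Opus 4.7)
The strategy is to verify three properties of $(L, [\cdot,\cdot]_C, \delta)$ in sequence: that $[\cdot,\cdot]_C$ is a genuine Lie bracket, that $\delta$ belongs to $\textbf{InvDer(L)}$ with respect to $[\cdot,\cdot]_C$ (i.e.\ both $\delta$ and $\delta^{-1}$ are derivations of the new bracket), and finally that the InvDer--Jacobi identity
\[
\circlearrowleft_{x,y,z}[\delta x,[y,z]_C]_C=0
\]
holds. The first point is entirely classical and uses nothing about $\delta$: skew-symmetry of $[\cdot,\cdot]_C$ is immediate from the definition, and the Jacobi identity for the commutator of an associative product follows from associativity of $\mu$ by a standard expansion in which all six orderings of $x,y,z$ appear once with each sign and cancel in pairs.

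For the derivation property of $\delta$ on $[\cdot,\cdot]_C$, I would compute directly
\[
\delta[x,y]_C=\delta\mu(x,y)-\delta\mu(y,x)=\mu(\delta x,y)+\mu(x,\delta y)-\mu(\delta y,x)-\mu(y,\delta x)=[\delta x,y]_C+[x,\delta y]_C,
\]
which uses only that $\delta$ is a derivation of $\mu$. The identical calculation with $\delta^{-1}$ in place of $\delta$ proves $\delta^{-1}$ is a derivation of $[\cdot,\cdot]_C$, invoking the hypothesis $\delta\in\textbf{InvDer(L)}$ (so that $\delta^{-1}$ is a derivation of $\mu$ in the first place).

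The substantive step is the InvDer--Jacobi identity. My plan is a direct expansion: write each of the three terms $[\delta x,[y,z]_C]_C$, $[\delta y,[z,x]_C]_C$, $[\delta z,[x,y]_C]_C$ as four triple $\mu$-products, giving twelve monomials in total. Then apply the defining InvDer associative identity $\mu(\delta a,\mu(b,c))=\mu(\mu(a,b),\delta c)$, read both left-to-right (transporting $\delta$ from the leftmost slot to the rightmost) and right-to-left (transporting it back), to rewrite each monomial. The key observation is that after these substitutions, the twelve terms organize into six cancelling pairs across the three cyclic expressions: for example $\mu(\delta x,\mu(y,z))$ appearing in $[\delta x,[y,z]_C]_C$ is converted by the InvDer identity to $\mu(\mu(x,y),\delta z)$, which cancels a term arising from $[\delta z,[x,y]_C]_C$, and similarly $\mu(\mu(y,z),\delta x)$ is rewritten as $\mu(\delta y,\mu(z,x))$ and paired against a term in $[\delta y,[z,x]_C]_C$. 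I expect the main obstacle to be purely notational: the twelve monomials must be paired correctly, but once a consistent labelling is fixed the cancellation is mechanical and yields $\circlearrowleft_{x,y,z}[\delta x,[y,z]_C]_C=0$, completing the verification.
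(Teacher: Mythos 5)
Your proposal is correct and follows essentially the same route as the paper: check skew-symmetry, expand the three cyclic terms $[\delta x,[y,z]_C]_C$, $[\delta y,[z,x]_C]_C$, $[\delta z,[x,y]_C]_C$ into twelve $\mu$-monomials and cancel them in pairs via the identity $\mu(\delta a,\mu(b,c))=\mu(\mu(a,b),\delta c)$, then observe that $\delta$ and $\delta^{-1}$ are derivations of $[\cdot,\cdot]_C$ because they are derivations of $\mu$. You are in fact slightly more complete than the paper, which leaves the cancellation and the derivation checks as ``simple calculations'' and omits the classical Jacobi identity for the commutator bracket that you verify explicitly.
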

\begin{proof}
	First we check the skew symmetry property. We have 
	\begin{equation*}
	[x,y]_C=\mu(x,y)-\mu(y,x)=-(\mu(y,x)-\mu(x,y))=-[y,x]_C,
	\end{equation*}
	where $x, y, z\in L$. Now, we check the InvDer Jacobi identity. Easily we get
	\begin{align*}
	[\delta x,[y,z]_C]_C&=\mu(\delta x,[y,z]_C)-\mu([y,z]_C,\delta x) \\
	&=\mu(\delta x,\mu(y,z))-\mu(\delta x,\mu(z,y))-\mu(\mu(y,z),\delta x)+\mu(\mu(z,y),\delta x).
	\end{align*}
	Similarly, we obtain 
	\begin{align*}
	[\delta y,[z,x]_C]_C&=\mu(\delta y,\mu(z,x))-\mu(\delta y,\mu(x,z))-\mu(\mu(z,x),\delta y)+\mu(\mu(x,z),\delta y),\\
	[\delta z,[x,y]_C]_C&=\mu(\delta z,\mu(x,y))-\mu(\delta z,\mu(y,x))-\mu(\mu(x,y),\delta z)+\mu(\mu(y,x),\delta z).
	\end{align*}
	Since $(L,\mu,\delta)$ is an \textbf{ InvDer associative algebra }, the simple calculations imply
	$\circlearrowleft_{x,y,z}[\delta x,[y,z]_C]_C=0$.
	It is easy to see that  $\delta$ and $\delta^{-1}$ are two derivations on $(L,[\cdot,\cdot]_C)$. This completes the proof.
\end{proof}
In the next, we construct an \textbf{ InvDer Lie and pre-Lie algebras } from \textbf{ InvDer associative algebras} via some operators like endomorphism and Rota-Baxter operator. In the following proposition we construct an \textbf{ InvDer Lie algebras } from \textbf{ InvDer associative algebras} via an endomorphism operator with an additional conditions.
\begin{proposition}
	Let $(L,\mu,\delta)$ be an \textbf{ InvDer associative algebras}, $R : L \rightarrow L$ be an endomorphism operator satisfying $R^2x=Rx$ for $x\in L$, $\delta \circ R= R \circ \delta $ and $\delta ^{-1} \circ R=R \circ \delta ^{-1}$. Then we can define an \textbf{ InvDer Lie algebras } structure on $L$ by 
	\begin{equation} \label{eq3.6}
	[x,y]=\mu(Rx,y)-\mu(Ry,x), \ \ \forall x,y \in L.
	\end{equation}
\end{proposition}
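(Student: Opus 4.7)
The plan is to reduce the proposition to the earlier result (that an \textbf{InvDer associative algebra} induces an \textbf{InvDer Lie algebra} via its commutator) by constructing a new \textbf{InvDer associative algebra} $(L,\mu_R,\delta)$ with $\mu_R(x,y):=\mu(Rx,y)$. The bracket in the statement is then exactly the commutator $\mu_R(x,y)-\mu_R(y,x)$, so the conclusion follows at once.

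First I would verify that $\mu_R$ is associative. Using that $R$ is an algebra endomorphism together with $R^2=R$ and the associativity of $\mu$,
\begin{equation*}
\mu_R(\mu_R(x,y),z)=\mu(R\mu(Rx,y),z)=\mu(\mu(R^2x,Ry),z)=\mu(\mu(Rx,Ry),z)=\mu(Rx,\mu(Ry,z))=\mu_R(x,\mu_R(y,z)).
\end{equation*}

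Next, using $\delta\circ R=R\circ\delta$ and the derivation property of $\delta$ for $\mu$, I would check
\begin{equation*}
\delta(\mu_R(x,y))=\delta(\mu(Rx,y))=\mu(\delta Rx,y)+\mu(Rx,\delta y)=\mu(R\delta x,y)+\mu(Rx,\delta y)=\mu_R(\delta x,y)+\mu_R(x,\delta y),
\end{equation*}
so that $\delta$ is a derivation of $\mu_R$; the identical argument with $\delta^{-1}\circ R=R\circ\delta^{-1}$ shows $\delta^{-1}$ is as well, whence $\delta\in\textbf{InvDer(L)}$ for $\mu_R$. For the \textbf{InvDer}-associativity condition \eqref{eq3.4} of $\mu_R$, I would compute
\begin{equation*}
\mu_R(\delta x,\mu_R(y,z))=\mu(\delta Rx,\mu(Ry,z))=\mu(\mu(Rx,Ry),\delta z)=\mu_R(\mu_R(x,y),\delta z),
\end{equation*}
applying $\delta R=R\delta$ and \eqref{eq3.4} for $\mu$ in the middle step, and $R^2=R$ together with $R$ being a homomorphism at the end.

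Invoking the earlier proposition on $(L,\mu_R,\delta)$ then yields the desired \textbf{InvDer Lie algebra} structure whose bracket is $\mu_R(x,y)-\mu_R(y,x)=\mu(Rx,y)-\mu(Ry,x)$. The only real obstacle is the \textbf{InvDer}-associativity of $\mu_R$, which crucially requires the commutation $R\delta=\delta R$ in order to push $\delta$ past $R$ and thereby invoke the \textbf{InvDer}-associativity of the original product $\mu$; the remaining verifications are routine applications of the hypotheses.
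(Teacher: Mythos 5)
Your proof is correct, and it takes a genuinely different route from the paper's. The paper argues by direct computation: it expands $[\delta x,[y,z]]$, $[\delta y,[z,x]]$ and $[\delta z,[x,y]]$ using the multiplicativity and idempotency of $R$ together with $\delta\circ R=R\circ\delta$, invokes the InvDer-associativity of $\mu$ to make the cyclic sum cancel, and then checks by hand that $\delta$ and $\delta^{-1}$ are derivations of the new bracket. You instead factor everything through the auxiliary product $\mu_R(x,y)=\mu(Rx,y)$, show that $(L,\mu_R,\delta)$ is itself an \textbf{InvDer associative algebra}, and then quote the paper's earlier proposition that the commutator of an InvDer associative algebra is an \textbf{InvDer Lie algebra}. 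Your three verifications all check out: associativity of $\mu_R$ uses exactly $R(\mu(x,y))=\mu(Rx,Ry)$, $R^2=R$ and associativity of $\mu$; the derivation property of $\delta$ and $\delta^{-1}$ for $\mu_R$ uses the commutation hypotheses; and $\mu_R(\delta x,\mu_R(y,z))=\mu(\mu(Rx,Ry),\delta z)=\mu_R(\mu_R(x,y),\delta z)$ is the identity \eqref{eq3.4} for $\mu$ applied to $Rx$, $Ry$, $z$. Your reduction is more modular — it isolates where each hypothesis enters and reuses an already-proved result rather than repeating a long cancellation — at the cost of leaning on the earlier proposition being fully established (including its tacit, standard claim that the commutator bracket satisfies the ordinary Jacobi identity). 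One caveat applies equally to both arguments: ``endomorphism operator'' must be read as a multiplicative map, $R(\mu(x,y))=\mu(Rx,Ry)$, a property the paper also uses silently when it expands $\mu(R([y,z]),\delta x)$.
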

\begin{proof}
	The skew symmetry property follows from the following:
	\begin{equation*}
	[x,y]=\mu(Rx,y)-\mu(Ry,x)=-(\mu(Ry,x)-\mu(Rx,y))=-[y,x],\ \ \forall x, y\in L.
	\end{equation*}
	Now, we check the InvDer Jacobi identity. From the properties of $R$ we get 
	\begin{align*}
	[\delta x,[y,z]]&=\mu(R\circ \delta x,[y,z])-\mu(R([y,z]),\delta x) \\
	&=\mu(R\circ \delta x,\mu(Ry,z))-\mu(R([y,z]),\delta x)-\mu(\mu(R^2y,Rz),\delta x)+\mu(\mu(R^2z,Ry),\delta x) \\
	&=\mu(\delta \circ Rx,\mu(Ry,z))-\mu(\delta \circ Rx,\mu(Rz,y))-\mu(\mu(Ry,Rz),\delta x)+\mu(\mu(Rz,Ry),\delta x),
	\end{align*}
	where $x, y, z\in L$. Similarly, we obtain 
	\begin{align*}
	[\delta y,[z,x]]&=\mu(\delta \circ Ry,\mu(Rz,x))-\mu(\delta \circ Ry,\mu(Rx,z))-\mu(\mu(Rz,Rx),\delta y)+\mu(\mu(Rx,Rz),\delta y), \\
	[\delta z,[x,y]]&=\mu(\delta \circ Rz,\mu(Rx,y))-\mu(\delta \circ Rz,\mu(Ry,x))-\mu(\mu(Rx,Ry),\delta z)+\mu(\mu(Ry,Rx),\delta z). 
	\end{align*}
	Since $(L,\mu,\delta)$ is an \textbf{ InvDer associative algebra}, we obtain 
	\begin{equation*}
	\circlearrowleft_{x,y,z}[\delta x,[y,z]]=0.
	\end{equation*}
	The last step in the proof is to show that $\delta$ and $\delta^{-1}$ are two derivations on $(L,[\cdot,\cdot])$. Direct calculations and the property $\delta \circ R=R\circ \delta$ imply
	\begin{align*}
	\delta([x,y])&=\delta(\mu(Rx,y)-\mu(Ry,x)) \\
	&=\mu(\delta \circ Rx,y)+\mu(Rx,\delta y)-\mu(\delta \circ Ry,x)-\mu(Ry,\delta x) \\
	&=\mu(R\circ \delta x,y)+\mu(Rx,\delta y)-\mu(R\circ \delta y,x)-\mu(Ry,\delta x) \\
	&=[\delta x,y]+[x,\delta y], 
	\end{align*}
	i.e., $\delta$ is a derivation on $(L,[\cdot,\cdot])$. The same result holds for $\delta^{-1}$. This completes the proof.
\end{proof}
Now we investigate, in the next theorem, the same relation by using a \textbf{ Rota-Baxter operator } of weight $(\lambda=0)$.
\begin{theorem}
	Let $(L,\mu,\delta)$ be an \textbf{ InvDer associative algebra}. Then $(L,\ast,\delta)$ is an \textbf{ InvDer pre-Lie algebra}, where
	\begin{equation}
	x\ast y=\mu(Rx,y)-\mu(y,Rx), \ \forall x,y \in L,
	\end{equation}
	such that $\delta \circ R=R \circ \delta$ and $\delta^{-1} \circ R=R \circ \delta^{-1}$.
\end{theorem}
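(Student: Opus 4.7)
The plan is to reduce the statement to the two earlier propositions linking \textbf{InvDer associative algebras} to \textbf{InvDer Lie algebras} and linking \textbf{InvDer Lie algebras} (with a compatible Rota-Baxter operator) to \textbf{InvDer pre-Lie algebras}. The key observation is that the product $\ast$ in the statement can be rewritten as $x\ast y=[Rx,y]_C$, where $[\cdot,\cdot]_C$ is the commutator bracket $[x,y]_C=\mu(x,y)-\mu(y,x)$. Once this is in place, the route through the already-proved results is direct.

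First, I would invoke the earlier proposition which shows that $(L,[\cdot,\cdot]_C,\delta)$ is an \textbf{InvDer Lie algebra} whenever $(L,\mu,\delta)$ is an \textbf{InvDer associative algebra}. Second, I would verify that $R$ remains a Rota-Baxter operator of weight zero on the Lie algebra $(L,[\cdot,\cdot]_C)$; this is a short computation using that $R$ is Rota-Baxter for $\mu$:
\begin{align*}
[Rx,Ry]_C &= \mu(Rx,Ry)-\mu(Ry,Rx) \\
&= R\bigl(\mu(Rx,y)+\mu(x,Ry)\bigr)-R\bigl(\mu(Ry,x)+\mu(y,Rx)\bigr) \\
&= R\bigl([Rx,y]_C+[x,Ry]_C\bigr).
\end{align*}
The compatibility conditions $\delta\circ R=R\circ\delta$ and $\delta^{-1}\circ R=R\circ\delta^{-1}$ are preserved verbatim from the hypothesis, since they concern the operators on $L$ and not the product structure.

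Third, I would observe that the pre-Lie product $x\ast y=\mu(Rx,y)-\mu(y,Rx)$ is literally $[Rx,y]_C$. Consequently, the triple $(L,[\cdot,\cdot]_C,\delta)$ together with the Rota-Baxter operator $R$ satisfies all the hypotheses of the earlier proposition passing from \textbf{InvDer Lie algebras} to \textbf{InvDer pre-Lie algebras} via a Rota-Baxter operator. Applying that proposition immediately yields that $(L,\ast,\delta)$ is an \textbf{InvDer pre-Lie algebra}, which is exactly what we want.

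The main obstacle, if any, is simply the step of verifying the Rota-Baxter identity on the commutator bracket; everything else is bookkeeping. If the author prefers a self-contained proof, one could instead verify directly the three required conditions (the pre-Lie identity, the InvDer pre-Lie identity, and that $\delta,\delta^{-1}$ are derivations for $\ast$) by repeated use of the associativity of $\mu$, the Rota-Baxter relation $\mu(Rx,Ry)=R\bigl(\mu(Rx,y)+\mu(x,Ry)\bigr)$, the InvDer associative identity $\mu(\delta x,\mu(y,z))=\mu(\mu(x,y),\delta z)$, and the commutation $\delta\circ R=R\circ\delta$; but this would essentially duplicate the computations already carried out in the two preceding propositions, so the reduction route is preferable.
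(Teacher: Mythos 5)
Your argument is correct, but it takes a different route from the paper. The paper proves this theorem by a long direct computation: it expands $\delta x\ast(y\ast z)$ and $(x\ast y)\ast\delta z$ in terms of $\mu$ and $R$, then repeatedly applies the commutation $\delta\circ R=R\circ\delta$, the associativity and InvDer identity of $\mu$, and the Rota--Baxter relation until the right-hand side $\delta y\ast(x\ast z)-(y\ast x)\ast\delta z$ appears. You instead factor the construction through the commutator Lie algebra, observing that $x\ast y=\mu(Rx,y)-\mu(y,Rx)=[Rx,y]_C$, so that the theorem is the composite of two results already proved in the paper: InvDer associative $\Rightarrow$ InvDer Lie via $[\cdot,\cdot]_C$, and InvDer Lie with a compatible Rota--Baxter operator $\Rightarrow$ InvDer pre-Lie via $x\ast y=[Rx,y]$. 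The only genuinely new ingredient you need is that $R$ remains a weight-zero Rota--Baxter operator for $[\cdot,\cdot]_C$, and your three-line verification of $[Rx,Ry]_C=R\bigl([Rx,y]_C+[x,Ry]_C\bigr)$ is exactly right; the commutation hypotheses on $\delta$, $\delta^{-1}$ and $R$ carry over unchanged. Your route is shorter, reuses proved material, and makes the theorem a literal instance of the commuting triangle (associative $\to$ Lie $\to$ pre-Lie) that the paper itself draws; the paper's direct computation is self-contained and does not depend on the correctness of the two intermediate propositions, but is considerably more laborious. One small point of hygiene: the theorem as stated never says that $R$ is a Rota--Baxter operator (this is only implicit from the sentence preceding it in the paper), so you should state that hypothesis explicitly before invoking it, as you in fact do.
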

\begin{proof}
	For any $x,y,z\in L$, we get 
	\begin{align*}
	\delta x \ast (y\ast z)&=\mu(R\circ \delta x,\mu(Ry,z))-\mu(R\circ \delta x,\mu(z,Ry))-\mu(\mu(Ry,z),R\circ \delta x)+\mu(\mu(z,Ry),R\circ \delta x),\\
	(x\ast y)\ast \delta z&=\mu(R \circ \mu(Rx,y),\delta z)-\mu(R\circ\mu(y,Rx),\delta z)-\mu(\delta z,R\circ\mu(Rx,y))+\mu(\delta z,R\circ\mu(y,Rx)).
	\end{align*}
	So, using the property $\delta \circ R=R\circ \delta$ and the associativity of $(L,\mu,\delta)$, we obtain
	\begin{align*}
	\delta x\ast(y\ast z)-(x\ast y)\ast \delta z&=\mu(\delta \circ Rx,\mu(Ry,z))-\mu(\delta \circ Rx,\mu(z,Ry))-\mu(\mu(Ry,z),\delta \circ Rx) \\
	&\ \ +\mu(\mu(z,Ry),\delta \circ Rx)-\mu(R\circ \mu(Rx,y),\delta z)+\mu(R\circ \mu(y,Rx),\delta z)\\
	&\ \ +\mu(\delta z,R\circ \mu(Rx,y))-\mu(\delta z,R\circ \mu(y,Rx)) \\
	&=\mu(\mu(Rx,Ry),\delta z)-\mu(\delta \circ Rx,\mu(z,Ry))-\mu(\mu(Ry,z),\delta \circ Rx) \\
	&\ \ +\mu(\delta z,\mu(Ry,Rx))-\mu(R\circ \mu(Rx,y),\delta z)+\mu(R\circ \mu(y,Rx),\delta z)\\
	&\ \ +\mu(\delta z,R\circ \mu(Rx,y))-\mu(\delta z,R\circ \mu(y,Rx)).
	\end{align*}
	Using the Rota-Baxter operator, the above equation gives us:
    \begin{align*}
	\delta x\ast(y\ast z)-(x\ast y)\ast \delta z&=\mu(R\circ \mu(Rx,y),\delta z)+\mu(R\circ \mu(x,Ry),\delta z)-\mu(\delta \circ Rx,\mu(z,Ry)) \\
	&\ \ -\mu(\mu(Ry,z),\delta \circ Rx)+\mu(\delta z,R\circ \mu(Ry,x))+\mu(\delta z,R\circ \mu(y,Rx))\\
	&\ \ -\mu(R\circ \mu(Rx,y),\delta z)+\mu(R\circ \mu(y,Rx),\delta z)+\mu(\delta z,R\circ \mu(Rx,y))\\
	&\ \ -\mu(\delta z,R\circ \mu(y,Rx))\\
	&=\mu(R\circ \mu(x,Ry),\delta z)-\mu(\delta \circ Rx,\mu(z,Ry))-\mu(\mu(Ry,z),\delta \circ Rx) 
	\\
	&\ \ +\mu(\delta z,R\circ \mu(Ry,x))+\mu(R\circ \mu(y,Rx),\delta z)
	+\mu(\delta z,R\circ \mu(Rx,y)).
	\end{align*}
	By reusing the Rota-Baxter operator in the above equation, we get
	\begin{align*}
		\delta x\ast(y\ast z)-(x\ast y)\ast \delta z&=\mu(R\circ \mu(x,Ry),\delta z)-\mu(\delta \circ Rx,\mu(z,Ry))-\mu(\mu(Ry,z),\delta \circ Rx) 
	\\
	&\ \ +\mu(\delta z,R\circ \mu(Ry,x))+\mu(\mu(Ry,Rx),\delta z)-\mu(R\circ \mu(Ry,x),\delta z)\\
	&\ \ +\mu(\delta z,\mu(Rx,Ry))-\mu(\delta z,R\circ \mu(x,Ry)) \\
	&=y\ast(x\ast z)-(y\ast x)\ast z.
	\end{align*}	
\end{proof}
\section{InvDer zinbiel and dendriform algebras}
In this section, we study the impact of an invertible derivation ($\delta \in InvDer(L)$) on zinbiel and dendriform algebras and define \textbf{InvDer zinbiel algebras} and \textbf{InvDer dendriform algebras}. \\
\subsection{InvDer zinbiel algebras}
The category of zinbiel algebras is Koszul dual to the category of Leibniz algebras in the sens of J. L. Loday (see \cite{L1} for more details). A zinbiel algebra is an algebra $(L,\diamond)$ consisting of a vector space $L$ and a bilinear map $\diamond:L\otimes L \rightarrow L$ such that
\begin{equation*}
x \diamond (y \diamond z)=(x \diamond y) \diamond z+(y\diamond x) \diamond z, \ \ \forall x,y,z \in L.
\end{equation*} 
A linear map $\delta:L \rightarrow L$ is said to be a derivation on a zinbiel algebra $(L,\diamond)$ if it satisfies the following equation
\begin{equation*}
\delta (x \diamond y)=(\delta x \diamond y)+(x \diamond \delta y), \text{ for } x,y \in L.
\end{equation*}
We denote by $Der(L)$ the set of all derivation.
\begin{proposition}
	Let $(L,\diamond)$ be a zinbiel algebra and $\delta$ be an invertible derivation. Then, for $x,y \in L$
	\begin{equation} \label{eq4.1}
	\delta^2(x \diamond y)=(\delta x \diamond \delta y) \Leftrightarrow \delta \in InvDer(L).
	\end{equation}
\end{proposition}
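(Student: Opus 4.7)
The plan is to mirror the strategy of Proposition \ref{prop2.1} (and the pre-Lie analogue in \eqref{eq2.6}): expand $\delta^2(x\diamond y)$ via the ordinary Leibniz rule, rewrite the resulting terms by inserting $\delta\circ \delta^{-1} = \mathrm{id}$ in the appropriate slots, and then collapse the sum using the hypothesis that $\delta^{-1}$ is a derivation. The bilinearity and the absence of any further identity of the zinbiel product in this step are what make the same computation go through verbatim.

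For the direction ($\Leftarrow$), assuming $\delta^{-1}\in Der(L)$, I would start from
\begin{equation*}
\delta^2(x\diamond y) = \delta^2 x \diamond y + 2\,\delta x \diamond \delta y + x\diamond \delta^2 y,
\end{equation*}
which follows from iterating the Leibniz rule for $\delta$. Then I rewrite each term by inserting $\delta\circ\delta^{-1}$: the first term becomes $\delta^2 x \diamond \delta^{-1}\delta y$, the last becomes $\delta^{-1}\delta x \diamond \delta^2 y$, and the middle $2\,\delta x \diamond \delta y$ splits as $\delta^{-1}\delta^2 x \diamond \delta y + \delta x \diamond \delta^{-1}\delta^2 y$. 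Grouping, these four summands are exactly $\delta^{-1}(\delta^2 x \diamond \delta y) + \delta^{-1}(\delta x \diamond \delta^2 y)$ by the Leibniz rule for $\delta^{-1}$. Using the Leibniz rule for $\delta$ again in reverse, this equals $\delta^{-1}\bigl(\delta(\delta x \diamond \delta y)\bigr) = \delta x \diamond \delta y$.

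For the direction ($\Rightarrow$), assuming $\delta^2(x\diamond y)=\delta x\diamond \delta y$, I substitute $x\mapsto \delta^{-1}x$, $y\mapsto \delta^{-1}y$ to obtain $\delta^2(\delta^{-1}x \diamond \delta^{-1}y) = x\diamond y$. Applying $\delta^{-1}$ to both sides and then the Leibniz rule for $\delta$ yields
\begin{equation*}
\delta^{-1}(x\diamond y) = \delta(\delta^{-1}x \diamond \delta^{-1}y) = \delta^{-1}x \diamond y + x\diamond \delta^{-1}y,
\end{equation*}
which is exactly the Leibniz identity for $\delta^{-1}$.

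No real obstacle is anticipated: the argument uses only bilinearity and the derivation property of $\delta$, so the particular axiom of a zinbiel algebra plays no role. The only point to double-check is that the slot-wise insertion of $\delta\circ\delta^{-1}$ commutes correctly with $\diamond$, which is immediate by bilinearity. The structure of the proof is therefore the same as in Proposition \ref{prop2.1} and can be stated briefly.
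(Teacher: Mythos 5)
Your argument is correct, and it is essentially the paper's own approach: the paper omits the proof of this zinbiel proposition precisely because it is the verbatim analogue of Proposition \ref{prop2.1}, whose forward direction you reproduce exactly and whose backward direction you carry out via the expansion $\delta^2(x\diamond y)=\delta^2x\diamond y+2\,\delta x\diamond\delta y+x\diamond\delta^2y$ with insertions of $\delta\circ\delta^{-1}$, which is the method of the Remark following that proposition. Your observation that only bilinearity and the Leibniz rule are used, and not the zinbiel identity, is also accurate.
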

The previous proposition is important to construct from a zinbiel algebra a new one which is called the \textbf{twisted zinbiel algebra by invertible derivation}.
\begin{theorem}\label{Th4.2}
	Let $(L,\diamond)$ be a zinbiel algebra and $\delta \in InvDer(L)$. Then $(L,\diamond_{\delta}=\delta \circ \diamond)$ is a zinbiel algebra, which is called the \textbf{twisted zinbiel algebra by invertible derivation}.
\end{theorem}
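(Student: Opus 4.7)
The plan is to mimic the strategy used in Theorems \ref{thm2.1} and \ref{thm3.4}: expand both sides of the zinbiel identity for $\diamond_{\delta}$ using that $\delta$ is a derivation together with the consequence $\delta^2(x\diamond y)=\delta x\diamond \delta y$ supplied by \eqref{eq4.1}, and then reduce everything to the zinbiel identity for the untwisted product $\diamond$.

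First I would expand $x\diamond_{\delta}(y\diamond_{\delta}z)=\delta\bigl(x\diamond\delta(y\diamond z)\bigr)$ using the Leibniz rule for $\delta$, and then apply \eqref{eq4.1} to replace $\delta^{2}(y\diamond z)$ by $\delta y\diamond\delta z$; a second application of the Leibniz rule to the remaining $\delta(y\diamond z)$ term yields the three-term expansion
\begin{equation*}
x\diamond_{\delta}(y\diamond_{\delta}z)=\delta x\diamond(\delta y\diamond z)+\delta x\diamond(y\diamond\delta z)+x\diamond(\delta y\diamond\delta z).
\end{equation*}
In exactly the same way I would expand $(x\diamond_{\delta}y)\diamond_{\delta}z$ and $(y\diamond_{\delta}x)\diamond_{\delta}z$; by the Leibniz rule and \eqref{eq4.1} each produces three terms of the form $(\bullet\diamond\bullet)\diamond\bullet$ where exactly one $\delta$ has landed on each of the two arguments in the inner product or on $z$.

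The second step is to apply the original zinbiel identity to each of the three summands produced for the left-hand side, replacing $a\diamond(b\diamond c)$ by $(a\diamond b)\diamond c+(b\diamond a)\diamond c$ with $(a,b,c)$ equal to $(\delta x,\delta y,z)$, $(\delta x,y,\delta z)$, and $(x,\delta y,\delta z)$ in turn. This converts the left-hand side into six terms of the form $(\bullet\diamond\bullet)\diamond\bullet$. A straightforward term-by-term inspection then matches these six summands against the six summands produced on the right-hand side from the expansions of $(x\diamond_{\delta}y)\diamond_{\delta}z$ and $(y\diamond_{\delta}x)\diamond_{\delta}z$, completing the verification of the zinbiel identity for $\diamond_{\delta}$.

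I do not expect any conceptual obstacle here; the identity \eqref{eq4.1} is precisely what is needed to make the two applications of the Leibniz rule combine cleanly, and the rest is a purely mechanical comparison of six terms. The only place where one has to be careful is in step two: one must make sure to apply the zinbiel identity in the correct orientation, i.e.\ expanding $a\diamond(b\diamond c)$ and not $(a\diamond b)\diamond c$, so that the resulting six terms pair up symmetrically with the two right-hand expansions. If the matching is done carefully, every term cancels and the identity holds.
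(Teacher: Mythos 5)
Your proposal is correct and follows essentially the same route as the paper: expand $x\diamond_{\delta}(y\diamond_{\delta}z)$ via the Leibniz rule and \eqref{eq4.1} into the three terms $\delta x\diamond(\delta y\diamond z)+\delta x\diamond(y\diamond\delta z)+x\diamond(\delta y\diamond\delta z)$, apply the zinbiel identity to each, and regroup the resulting six terms as $(x\diamond_{\delta}y)\diamond_{\delta}z+(y\diamond_{\delta}x)\diamond_{\delta}z$. The only cosmetic difference is that you expand both sides and match, whereas the paper expands only the left-hand side and recognizes the two groups of three terms directly.
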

\begin{proof}
	For any $x,y,z \in L$, we get
	\begin{align*}
	x \diamond_{\delta} (y \diamond_{\delta} z)&=\delta(x \diamond (y \diamond_{\delta} z)) \\
	&=\delta x \diamond (y \diamond_{\delta}z)+x \diamond \delta^2(y\diamond z) \\
	&\overset{\eqref{eq4.1}}{=}\delta x \diamond (\delta y \diamond z)+\delta x \diamond (y\diamond \delta z)+x\diamond (\delta y \diamond \delta z) \\
	&=(\delta x \diamond \delta y)\diamond z+(\delta y \diamond \delta x) \diamond z+(\delta x \diamond y)\diamond \delta z+(y \diamond \delta x)\diamond \delta z+(x \diamond \delta y)\diamond \delta z+(\delta y\diamond x)\diamond \delta z \\
	&=((\delta x \diamond \delta y)\diamond z+(\delta x \diamond y)\diamond \delta z+(x \diamond \delta y)\diamond \delta z)+ ((\delta y \diamond \delta x) \diamond z+(y \diamond \delta x)\diamond \delta z+(\delta y\diamond x)\diamond \delta z ) \\
	&=(x \diamond_{\delta}y)\diamond_{\delta}z+(y\diamond_{\delta}x)\diamond_{\delta}z,
	\end{align*}
	which completes the proof.
\end{proof}
\begin{proposition}\label{Pr4.3}
	Let $(L,\diamond_{\delta})$ be a \textbf{twisted zinbiel algebra by invertible derivation}. Then the following equation holds 
	\begin{equation}\label{eq4.2}
	\delta x \diamond (y \diamond z)=(x\diamond y)\diamond \delta z+(y\diamond x)\diamond \delta z,\ \ \forall x,y,z \in L.
	\end{equation}
\end{proposition}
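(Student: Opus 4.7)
My plan is to mimic the argument used for Proposition \ref{prop2.3}, which treats the analogous statement for pre-Lie algebras. The starting point is Theorem \ref{Th4.2}: the zinbiel identity for $\diamond_{\delta}$ gives
\begin{equation*}
x \diamond_{\delta}(y \diamond_{\delta} z) - (x \diamond_{\delta} y) \diamond_{\delta} z - (y \diamond_{\delta} x) \diamond_{\delta} z = 0.
\end{equation*}
Every term in this expression has an outermost $\delta$, so the whole left-hand side can be written as $\delta$ applied to some element of $L$. Since $\delta$ is an invertible derivation and therefore has trivial kernel, I can strip off the outer $\delta$ to obtain the intermediate identity
\begin{equation*}
x \diamond (y \diamond_{\delta} z) = (x \diamond_{\delta} y) \diamond z + (y \diamond_{\delta} x) \diamond z.
\end{equation*}

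Next, I would expand each occurrence of $\diamond_{\delta}$ using its definition $\diamond_{\delta} = \delta \circ \diamond$ together with the Leibniz rule for $\delta$ on $(L,\diamond)$. The left-hand side becomes $x \diamond (\delta y \diamond z) + x \diamond (y \diamond \delta z)$, while the right-hand side becomes $(\delta x \diamond y) \diamond z + (x \diamond \delta y) \diamond z + (\delta y \diamond x) \diamond z + (y \diamond \delta x) \diamond z$.

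Now I would apply the original zinbiel identity of $(L, \diamond)$ to the two terms on the left. Specifically, $x \diamond (\delta y \diamond z) = (x \diamond \delta y) \diamond z + (\delta y \diamond x) \diamond z$, and $x \diamond (y \diamond \delta z) = (x \diamond y) \diamond \delta z + (y \diamond x) \diamond \delta z$. After this substitution, the terms $(x \diamond \delta y) \diamond z$ and $(\delta y \diamond x) \diamond z$ appear on both sides and cancel, leaving
\begin{equation*}
(x \diamond y) \diamond \delta z + (y \diamond x) \diamond \delta z = (\delta x \diamond y) \diamond z + (y \diamond \delta x) \diamond z.
\end{equation*}
Finally, the right-hand side is precisely $\delta x \diamond (y \diamond z)$ by one more application of the zinbiel identity of $(L,\diamond)$, which yields \eqref{eq4.2}.

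I do not anticipate a genuinely hard step here: the argument is a bookkeeping exercise that parallels Proposition \ref{prop2.3} almost verbatim. The only minor point of care is the very first step, where one must justify passing from the equation with all terms prefixed by $\delta$ to the equation without it; this uses the invertibility of $\delta$ (hence injectivity), and would be the natural place to slip if one tried to shortcut it.
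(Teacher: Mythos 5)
Your proof is correct and follows essentially the same route as the paper's: both start from the zinbiel identity for $\diamond_{\delta}$, strip the outermost $\delta$ using injectivity, expand the remaining occurrences of $\diamond_{\delta}$ via the Leibniz rule, and then cancel using the zinbiel identity of $(L,\diamond)$. The only difference is cosmetic bookkeeping in the middle step (you apply the zinbiel identity to both left-hand terms before cancelling, the paper cancels first), and your version in fact reads more cleanly than the paper's, which contains a typo at the corresponding stage.
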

\begin{proof}
	For any  $x,y,z \in L$, we have
	\begin{align*}
	x \diamond_{\delta} (y\diamond_{\delta}z)-(x\diamond_{\delta}y)\diamond_{\delta}z-(y\diamond_{\delta}x)\diamond_{\delta}z=0,
	\end{align*}
	and consequently 
	\begin{align*}
	\delta \Big(x \diamond (y\diamond_{\delta}z)-(x\diamond_{\delta}y)\diamond z-(y\diamond_{\delta}x)\diamond z \Big)=0.
	\end{align*}
	Since $\delta \in InvDer(L)$, the above equation implies
	\begin{align*}
	0&=x \diamond (y\diamond_{\delta}z)-(x\diamond_{\delta}y)\diamond z-(y\diamond_{\delta}x)\diamond z \\
	&=x\diamond(\delta y\diamond z)+x\diamond(y \diamond \delta z)-(\delta x\diamond y)\diamond z-(x\diamond \delta y)\diamond z-(\delta y\diamond x)\diamond z-(y\diamond \delta x)\diamond z.
	\end{align*}
	Since $(L,\diamond)$ is a zinbiel algebra, the above equation reduces to 
	\begin{align*}
	x\diamond(y\diamond \delta z)-(\delta x\diamond y)\diamond z-(\delta y \diamond x)\diamond z=0,
	\end{align*}
	or 
	\begin{align*}
	(x\diamond y)\diamond \delta z+(y\diamond x)\diamond \delta z-\delta x\diamond(y\diamond z)=0.
	\end{align*}
	This completes the proof.
\end{proof}
According to Theorem \ref{Th4.2} and Proposition \ref{Pr4.3}, we can give the following definition
\begin{definition}
	An \textbf{InvDer zinbiel algebra} is a triple $(L,\diamond,\delta)$ consisting of a vector space $L$, a bilinear map $\diamond :L\otimes L \rightarrow L$ and a derivation $\delta$ such that $\delta \in InvDer(L)$ satisfying the following identity: 
	\begin{equation} \label{eq4.3}
	\delta x \diamond (y\diamond z)=(x\diamond y)\diamond \delta z+(y \diamond x)\diamond \delta z, \ \ \forall x,y,z \in L.
	\end{equation}
\end{definition}
\begin{proposition}
	Let $(L,\diamond,\delta)$ be an \textbf{InvDer zinbiel algebra}. Then the following equations holds 
	\begin{equation} \label{eq4.4}
	\delta x \diamond (z \diamond y)=\delta z \diamond (x \diamond y),
	\end{equation}
	\begin{equation}\label{eq4.5}
	(x\diamond y)\diamond \delta z=(x\diamond z)\diamond \delta y,
	\end{equation}
	where $x,y,z \in L$.
\end{proposition}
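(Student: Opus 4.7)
The plan is to exploit the defining InvDer-zinbiel identity \eqref{eq4.3} together with the zinbiel axiom, in the same spirit as the proof of Proposition~\ref{Pr4.3}.

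For \eqref{eq4.4}, the verification is a direct two-line comparison. Applying \eqref{eq4.3} with the substitution $y \leftrightarrow z$ expands the left-hand side as
\[
\delta x \diamond (z \diamond y) = (x \diamond z) \diamond \delta y + (z \diamond x) \diamond \delta y,
\]
and applying \eqref{eq4.3} with the substitution $x \leftrightarrow z$ expands the right-hand side as
\[
\delta z \diamond (x \diamond y) = (z \diamond x) \diamond \delta y + (x \diamond z) \diamond \delta y.
\]
These two right-hand sides are termwise identical, which yields \eqref{eq4.4}.

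For \eqref{eq4.5}, the argument is more delicate. First I would establish the ``slot-shift'' auxiliary identity
\[
\delta x \diamond (y \diamond z) = x \diamond (y \diamond \delta z),
\]
which follows at once by matching the right-hand side of \eqref{eq4.3} against the zinbiel expansion of $x \diamond (y \diamond \delta z) = (x\diamond y)\diamond \delta z + (y\diamond x)\diamond \delta z$. Combining this slot-shift with the zinbiel axiom allows free motion of $\delta$ across the positions of a triple product. The plan is then to rewrite the difference $(x \diamond y) \diamond \delta z - (x \diamond z) \diamond \delta y$ as $\delta$ applied to a zinbiel expression $w$ built from $x, y, z$; then $\ker \delta = \{0\}$ together with an independent verification that $w = 0$ yields \eqref{eq4.5}.

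The main obstacle is this last step. A naive unfolding using only \eqref{eq4.3}, \eqref{eq4.4}, and the zinbiel axiom collapses tautologically, so the essential ingredient will be the Inv consequence $\delta^2(x \diamond y) = \delta x \diamond \delta y$ from Proposition 4.1, which, combined with the Leibniz rule for both $\delta$ and $\delta^{-1}$, breaks the symmetry and makes the factored-out $w$ visible. This mirrors the ``apply $\delta$ and use $\ker\delta = \{0\}$'' trick used near the end of the proof of Proposition~\ref{Pr4.3}.
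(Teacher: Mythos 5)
Your verification of \eqref{eq4.4} is correct and is exactly the paper's argument: two applications of the defining identity \eqref{eq4.3} (with the roles of the variables permuted) produce the same two terms $(x\diamond z)\diamond\delta y+(z\diamond x)\diamond\delta y$ on both sides. Your auxiliary ``slot-shift'' identity $\delta x\diamond(y\diamond z)=x\diamond(y\diamond\delta z)$ is also valid, since the zinbiel axiom applied to $x\diamond(y\diamond\delta z)$ reproduces the right-hand side of \eqref{eq4.3} verbatim.

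The problem is \eqref{eq4.5}: what you give there is a plan, not a proof, and the plan stops exactly at the point where all the content lies. You correctly observe that \eqref{eq4.3}, \eqref{eq4.4} and the zinbiel axiom alone close up tautologically (substituting \eqref{eq4.3} into itself only regenerates \eqref{eq4.4}), and you announce that the difference $(x\diamond y)\diamond\delta z-(x\diamond z)\diamond\delta y$ should be written as $\delta(w)$ with $w=0$ forced by $\delta^2(u\diamond v)=\delta u\diamond\delta v$ and the Leibniz rules for $\delta$ and $\delta^{-1}$. But you never exhibit $w$, never show that the difference actually factors through $\delta$, and never carry out the verification $w=0$; ``the essential ingredient will be\dots'' is a promissory note, not an argument. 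Since \eqref{eq4.5} is genuinely not a formal consequence of the identities you do use, this is a real gap, not a routine omission. For comparison, the paper's own proof of \eqref{eq4.5} is a two-link chain resting on the expansion $(x\diamond y)\diamond\delta z=\delta x\diamond(y\diamond z)+\delta x\diamond(z\diamond y)$, a ``dual'' form of \eqref{eq4.3} in which the symmetrization sits in the inner argument rather than on the right factor; the paper does not derive this expansion from its stated zinbiel convention either, so the step you leave open is precisely the step the paper glosses over, and your proposal does not close it.
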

\begin{proof}
	For any $x,y,z \in L$, we get
	\begin{align*}
	\delta x \diamond (z \diamond y)=(x\diamond z)\diamond \delta y+(z\diamond x) \diamond \delta y=(z\diamond x) \diamond \delta y+(x\diamond z)\diamond \delta y=\delta z \diamond (x \diamond y),
	\end{align*}
	and 
	\begin{align*}
	(x\diamond y)\diamond \delta z=\delta x\diamond(y\diamond z)+\delta x \diamond (z \diamond y)=\delta x \diamond (z \diamond y)+\delta x\diamond(y\diamond z)=(x\diamond z) \diamond \delta y.
	\end{align*}
\end{proof}
\begin{proposition}
	Let $(L,\diamond,\delta)$ be an \textbf{InvDer zinbiel algebra}. Then $(L,\mu)$ is a commutative \textbf{InvDer associative algebra}, where the bilinear map $\mu:L \rightarrow L$ depends on $\diamond$ as mentioned bellow
	\begin{equation*}
	\mu(x,y)=x\diamond y+y \diamond x,\ \ \forall x,y \in L.
	\end{equation*}
\end{proposition}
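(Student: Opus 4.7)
The proposition requires verifying four things about $(L,\mu,\delta)$: commutativity of $\mu$, associativity of $\mu$, the InvDer-associative identity \eqref{eq3.4}, and that $\delta,\delta^{-1}$ remain derivations for $\mu$. Commutativity is immediate from $\mu(x,y)=x\diamond y+y\diamond x=\mu(y,x)$. The derivation property is also essentially free: since $\delta$ is $\mathbb{K}$-linear and a derivation for $\diamond$, one has $\delta(\mu(x,y))=\delta(x\diamond y)+\delta(y\diamond x)=\mu(\delta x,y)+\mu(x,\delta y)$, and the same argument works verbatim for $\delta^{-1}$.

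For associativity, the plan is the standard argument for the symmetrization of a zinbiel product. I would expand $\mu(\mu(x,y),z)=(x\diamond y)\diamond z+(y\diamond x)\diamond z+z\diamond(x\diamond y)+z\diamond(y\diamond x)$ and apply the zinbiel identity to the first pair, collapsing them to $x\diamond(y\diamond z)$. Expanding $\mu(x,\mu(y,z))$ symmetrically and collapsing $(y\diamond z)\diamond x+(z\diamond y)\diamond x$ to $y\diamond(z\diamond x)$, the remaining discrepancy $z\diamond(x\diamond y)+z\diamond(y\diamond x)$ versus $x\diamond(z\diamond y)+y\diamond(z\diamond x)$ is handled by applying the zinbiel identity one more time to each of the four terms, revealing both sides as the same four-term sum in the right-normed products.

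The main work is the InvDer-associative identity $\mu(\delta x,\mu(y,z))=\mu(\mu(x,y),\delta z)$, and this is the step I expect to be the most delicate. I would expand the left-hand side as
\begin{equation*}
\delta x\diamond(y\diamond z)+\delta x\diamond(z\diamond y)+(y\diamond z)\diamond\delta x+(z\diamond y)\diamond\delta x,
\end{equation*}
then apply the InvDer-zinbiel identity \eqref{eq4.3} to each of the first two terms (treating them as $\delta x\diamond(y\diamond z)$ and $\delta x\diamond(z\diamond y)$) to move $\delta$ off the outside. For the remaining pair $(y\diamond z)\diamond\delta x+(z\diamond y)\diamond\delta x$, I would apply \eqref{eq4.3} in the form obtained by the substitution $x\mapsto z,\,y\mapsto y,\,z\mapsto x$, which rewrites this sum as $\delta z\diamond(y\diamond x)$, or alternatively re-expand it again via \eqref{eq4.3} with appropriate relabelling.

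Symmetrically, expanding $\mu(\mu(x,y),\delta z)$ and applying \eqref{eq4.3} to $\delta z\diamond(x\diamond y)$ and $\delta z\diamond(y\diamond x)$ yields an expression with the same six right-normed products $(x\diamond y)\diamond\delta z$, $(y\diamond x)\diamond\delta z$, $(x\diamond z)\diamond\delta y$, $(z\diamond x)\diamond\delta y$, $(y\diamond z)\diamond\delta x$, $(z\diamond y)\diamond\delta x$ appearing in the expansion of the left-hand side. Matching the two six-term sums completes the proof. The only obstacle is bookkeeping: one must pick the correct three substitutions of \eqref{eq4.3} so that every application of $\delta$ ends up on the innermost factor on the right, but no step involves anything beyond \eqref{eq4.3} and the zinbiel axiom.
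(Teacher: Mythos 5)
Your proposal is correct and follows essentially the same route as the paper: the paper's proof likewise expands $\mu(\delta x,\mu(y,z))$ into the four terms $\delta x\diamond(y\diamond z)+\delta x\diamond(z\diamond y)+(y\diamond z)\diamond\delta x+(z\diamond y)\diamond\delta x$ and converts it to $\mu(\mu(x,y),\delta z)$ using \eqref{eq4.3} and its consequence \eqref{eq4.4}. In fact you are more thorough than the paper, which only verifies the InvDer-associative identity and declares commutativity trivial, leaving the associativity of $\mu$ and the derivation conditions on $\delta,\delta^{-1}$ unchecked; your additional verifications are correct and harmless.
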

\begin{proof}
	Let $x,y,z \in L$. Since $(L,\diamond,\delta)$ is an \textbf{InvDer zinbiel algebra}, so using \eqref{eq4.4} the following calculus holds
	\begin{align*}
	\mu(\delta x,\mu(y,z))&=\delta x \diamond \mu(y,z)+\mu(y,z) \diamond \delta x \\
	&=\delta x \diamond (y\diamond z)+\delta x\diamond (z\diamond y)+(y\diamond z)\diamond \delta x+(z\diamond y)\diamond \delta x \\
	&=(x\diamond y)\diamond \delta z+(y\diamond x)\diamond \delta z+\delta z \diamond (x \diamond y)-(z\diamond y)\diamond \delta x+\delta z\diamond (y\diamond x)+(z\diamond y)\diamond \delta x \\
	&=(x\diamond y)\diamond \delta z+(y\diamond x)\diamond \delta z+\delta z \diamond (x \diamond y)+\delta z\diamond (y\diamond x) \\
	&=\mu(\mu(x,y),\delta z).
	\end{align*}
	This completes the proof, because the commutativity is trivial.
\end{proof}
\begin{theorem}
	Let $(L,\diamond,\delta)$ be an \textbf{InvDer zinbiel algebra}. Then $(L,[\cdot,\cdot],\delta)$ is an \textbf{InvDer Lie algebra}, where the Lie bracket $[\cdot,\cdot]$ depends on the bilinear map $\diamond$ as mentioned in the following equation: 
	\begin{equation}
	[x,y]=x\diamond y-y\diamond x,\ \ \forall x,y \in L.
	\end{equation}
\end{theorem}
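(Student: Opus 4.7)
The plan is to check the four requirements for $(L,[\cdot,\cdot],\delta)$ to be an InvDer Lie algebra: skew-symmetry of $[\cdot,\cdot]$, the Jacobi identity, that $\delta$ and $\delta^{-1}$ are derivations of $(L,[\cdot,\cdot])$, and the InvDer-Jacobi identity $\circlearrowleft_{x,y,z}[\delta x,[y,z]]=0$. Skew-symmetry is immediate from $[x,y]=x\diamond y-y\diamond x$. The derivation property reduces to the one-line check
\[\delta[x,y] = (\delta x \diamond y + x \diamond \delta y) - (\delta y \diamond x + y \diamond \delta x) = [\delta x, y] + [x, \delta y],\]
together with the analogous computation for $\delta^{-1}$; both are routine since $\delta$ and $\delta^{-1}$ are assumed to be derivations of $(L,\diamond)$.

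The key step will be to establish the stronger identity $[\delta x,[y,z]]=0$ for all $x,y,z\in L$, i.e.\ $\delta x\diamond[y,z]=[y,z]\diamond\delta x$. I would expand
\[\delta x\diamond[y,z]=\delta x\diamond(y\diamond z)-\delta x\diamond(z\diamond y),\]
apply the InvDer-zinbiel axiom \eqref{eq4.3} to both summands to produce four $\diamond$-monomials of the form $(a\diamond b)\diamond\delta c$, and then use the rewrite rule \eqref{eq4.5}, namely $(a\diamond b)\diamond\delta c=(a\diamond c)\diamond\delta b$, on three of the four terms in suitable directions. After this reshuffle one pair of monomials cancels and the remaining two terms are exactly $(y\diamond z)\diamond\delta x - (z\diamond y)\diamond\delta x=[y,z]\diamond\delta x$.

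Once $[\delta x,[y,z]]=0$ is in hand, the invertibility of $\delta$ lets us write an arbitrary $w\in L$ as $\delta(\delta^{-1}w)$, so $[w,[y,z]]=0$ for every $w,y,z\in L$. Consequently both the ordinary Jacobi identity and the InvDer-Jacobi identity for $[\cdot,\cdot]$ collapse to trivial sums of three zero terms, and nothing else remains to verify. The main obstacle is simply the bookkeeping inside the key identity: since \eqref{eq4.5} rotates the $\delta$ among the three variables, one must pick the rewrite direction for each of the three targeted terms so that the four monomials produced by \eqref{eq4.3} realign with the two monomials making up $[y,z]\diamond\delta x$; the rest of the argument is purely formal.
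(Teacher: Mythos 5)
Your proposal is correct, and it takes a genuinely stronger route than the paper. The paper's proof only expands the three cyclic terms $[\delta x,[y,z]]$, $[\delta y,[z,x]]$, $[\delta z,[x,y]]$ and asserts that their sum vanishes via \eqref{eq4.4}, \eqref{eq4.5} and the InvDer zinbiel identity; it never addresses the ordinary Jacobi identity for $[\cdot,\cdot]$, nor the (easy) fact that $\delta$ and $\delta^{-1}$ remain derivations of the bracket. You instead prove the termwise identity $[\delta x,[y,z]]=0$: expanding $\delta x\diamond[y,z]$ by \eqref{eq4.3} into $(x\diamond y)\diamond\delta z+(y\diamond x)\diamond\delta z-(x\diamond z)\diamond\delta y-(z\diamond x)\diamond\delta y$, the rewrite \eqref{eq4.5} cancels the first monomial against the third and turns the second and fourth into $(y\diamond z)\diamond\delta x-(z\diamond y)\diamond\delta x=[y,z]\diamond\delta x$, exactly as you describe. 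This buys something real: surjectivity of $\delta$ then gives $[w,[y,z]]=0$ for every $w$, so the Jacobi identity holds for free. That verification cannot be skipped --- the commutator of a general zinbiel product need not satisfy Jacobi (test it on the free zinbiel algebra on three generators), and Jacobi does not follow formally from the cyclic identity $\circlearrowleft_{x,y,z}[\delta x,[y,z]]=0$ alone, since substituting $\delta^{-1}$ into that cyclic sum does not remove $\delta$ from all three slots simultaneously. So your argument actually closes a gap the paper's proof leaves open. Both proofs rest equally on the earlier proposition supplying \eqref{eq4.4} and \eqref{eq4.5}, so you are not assuming anything the paper does not.
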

\begin{proof}
	First, we investigate the skew symmetry property as follows:  
	\begin{equation*}
	[x,y]=x\diamond y-y\diamond x=-(y\diamond x-x\diamond y)=-[y,x], \ \ \forall x,y,z \in L.
	\end{equation*}
	Now, we focus on the \textbf{InvDer Jacobi} property. We have
	\begin{align*}
	[\delta x,[y,z]]&=\delta x \diamond [y,z]-[y,z] \diamond \delta x \\
	&=\delta x\diamond (y\diamond z-z\diamond y)-(y\diamond z-z\diamond y) \diamond \delta x \\
	&=\delta x\diamond (y\diamond z)-\delta x \diamond (z\diamond y)-(y\diamond z)\diamond\delta x+(z\diamond y)\diamond \delta x.
	\end{align*}
	Similarly, we obtain 
	\begin{align*}
	[\delta y,[z,x]]&=\delta y\diamond(z\diamond x)-\delta y\diamond(x\diamond z)-(z\diamond x)\diamond \delta y+(x\diamond z)\diamond \delta y, \\
	[\delta z,[x,y]]&=\delta z\diamond (x\diamond y)-\delta z \diamond(y\diamond x)-(x\diamond y)\diamond\delta z+(y\diamond x)\diamond\diamond z.
	\end{align*}
	Using \eqref{eq4.4} and \eqref{eq4.5} and considering  that $(L,\diamond,\delta)$ is an \textbf{InvDer zinbiel algebra}, we obtain 
	\begin{equation*}
	\circlearrowleft_{x,y,z}[\delta x,[y,z]]=0,
	\end{equation*}
	which completes the proof.
\end{proof}
\subsection{InvDer dendriform algebras}
A dendriform algebra is a vector space $L$ equipped with two binary operations $\prec$ and $\succ$ which they satisfy three new identities. Loday had defined the notion of free dendriform algebras also its cohomology in \cite{L2}.

Recall that if $(L,\prec,\succ)$ is a dendriform algebra and $\delta:L \rightarrow L$ a linear map, then $\delta$ is said to be a derivation on $(L,\prec,\succ)$ if it satisfies the following conditions:
\begin{equation*}
\delta(x\prec y)=\delta x\prec y+x\prec\delta y,\ \ \ \delta(x\succ y)=\delta x\succ y+x\succ\delta y,\ \ \forall x,y \in L.
\end{equation*}
We denote by $Der(L)$ the set of all derivations on $L$.
\begin{proposition}
	Let $(L,\prec,\succ)$ be a dendriform algebra and $\delta$ be an invertible derivation. Then, for any $x,y \in L$, we have
	\begin{align*}
	\delta^2(x\prec y)=\delta x \prec \delta y \Leftrightarrow \delta \in \textbf{InvDer(L)},\ \ \ \delta^2(x\succ y)=\delta x \succ \delta y \Leftrightarrow \delta \in \textbf{InvDer(L)}.
	\end{align*}
\end{proposition}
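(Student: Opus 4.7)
The plan is to follow the pattern already established in Proposition \ref{prop2.1} (for Lie brackets), since the dendriform operations $\prec$ and $\succ$ enter the statement in a completely symmetric and independent way. The two equivalences can be proved by an identical argument applied to each operation, so I would only write out the case of $\prec$ in detail; the case of $\succ$ is word-for-word the same with $\prec$ replaced by $\succ$ throughout.

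For the forward direction, assume $\delta^2(x \prec y) = \delta x \prec \delta y$ for all $x,y \in L$. Substituting $\delta^{-1}x$ and $\delta^{-1}y$ in place of $x$ and $y$, and then applying $\delta^{-1}$ to both sides, I get $\delta(\delta^{-1}x \prec \delta^{-1}y) = \delta^{-1}(x \prec y)$. Expanding the left side by the Leibniz rule for $\delta$ on $\prec$ (valid because $\delta$ is assumed to be a derivation) yields $\delta^{-1}x \prec y + x \prec \delta^{-1}y = \delta^{-1}(x \prec y)$, which is exactly the Leibniz rule for $\delta^{-1}$ on $\prec$.

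For the converse, assume $\delta^{-1}$ is a derivation with respect to $\prec$. Starting from $\delta^2(x \prec y) = \delta(\delta x \prec y + x \prec \delta y)$ by the Leibniz rule for $\delta$, I would rewrite $y = \delta^{-1}(\delta y)$ in the first summand and $x = \delta^{-1}(\delta x)$ in the second, obtaining $\delta(\delta x \prec \delta^{-1}(\delta y) + \delta^{-1}(\delta x) \prec \delta y)$. The bracketed expression is precisely $\delta^{-1}(\delta x \prec \delta y)$ by the Leibniz rule for $\delta^{-1}$, so applying the outer $\delta$ collapses the result to $\delta x \prec \delta y$, as required.

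I do not foresee any real obstacle, since the whole proposition is just the dendriform analogue of Proposition \ref{prop2.1} and of the corresponding statements already used for pre-Lie, associative, and zinbiel algebras. The one point worth noting is that in the dendriform setting a derivation is required to satisfy the Leibniz rule for both $\prec$ and $\succ$, which is why the two equivalences together (rather than either one individually) encode full membership $\delta \in \textbf{InvDer(L)}$.
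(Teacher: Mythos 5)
Your proof is correct and follows exactly the substitution argument the paper uses for the analogous Proposition \ref{prop2.1} (the paper omits the proof of the dendriform version precisely because it is this same argument applied to $\prec$ and $\succ$ separately). Your closing remark that the two displayed conditions must be read jointly to recover full membership in $\textbf{InvDer(L)}$ is a fair and worthwhile clarification of the statement as written.
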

\begin{theorem}
	Let $(L,\prec,\succ)$ be a dendriform algebra and $\delta \in \textbf{InvDer(L)}$. Then $(L,\prec_{\delta}=\delta \circ \prec,\succ_{\delta}=\delta \circ \succ)$ is also a dendriform algebra which is called \textbf{twisted dendriform algebra by invertible derivation}.
\end{theorem}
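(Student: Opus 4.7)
The plan is to verify, one by one, the three defining axioms of a dendriform algebra for the twisted operations $\prec_\delta$ and $\succ_\delta$. Recall that a dendriform algebra satisfies
\begin{align*}
(x \prec y) \prec z &= x \prec (y \prec z) + x \prec (y \succ z), \\
(x \succ y) \prec z &= x \succ (y \prec z), \\
(x \prec y) \succ z + (x \succ y) \succ z &= x \succ (y \succ z).
\end{align*}
The strategy in each case mirrors the proof of Theorem \ref{Th4.2} for zinbiel algebras: expand both sides using $\prec_\delta=\delta\circ\prec$ and $\succ_\delta=\delta\circ\succ$, apply the Leibniz rule for $\delta$ (which is a derivation for both operations) twice, and then convert every second-order term $\delta^2(x\prec y)$ or $\delta^2(x\succ y)$ into $\delta x\prec\delta y$ or $\delta x\succ\delta y$ via the preceding proposition, which applies because $\delta\in\textbf{InvDer(L)}$.

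As a representative instance, for the first axiom I would first compute
\[
(x \prec_\delta y) \prec_\delta z = \delta\bigl(\delta(x\prec y)\prec z\bigr) = (\delta x\prec\delta y)\prec z + (\delta x\prec y)\prec\delta z + (x\prec\delta y)\prec\delta z,
\]
using one Leibniz expansion followed by the identity $\delta^2(x\prec y)=\delta x\prec\delta y$. Similarly I would expand
\[
x \prec_\delta (y \prec_\delta z) = \delta x\prec(\delta y\prec z) + \delta x\prec(y\prec\delta z) + x\prec(\delta y\prec\delta z),
\]
\[
x \prec_\delta (y \succ_\delta z) = \delta x\prec(\delta y\succ z) + \delta x\prec(y\succ\delta z) + x\prec(\delta y\succ\delta z).
\]
Matching these six terms on the right with the three terms on the left via the original axiom $(a\prec b)\prec c=a\prec(b\prec c)+a\prec(b\succ c)$, applied with $(a,b,c)$ equal to $(\delta x,\delta y,z)$, $(\delta x,y,\delta z)$, and $(x,\delta y,\delta z)$ respectively, yields the identity.

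The second axiom is the easiest: both $(x\succ_\delta y)\prec_\delta z$ and $x\succ_\delta(y\prec_\delta z)$ expand into three parallel terms of the form $(\delta a\succ\delta b)\prec c$, $(\delta a\succ b)\prec\delta c$, $(a\succ\delta b)\prec\delta c$ versus $\delta a\succ(\delta b\prec c)$, $\delta a\succ(b\prec\delta c)$, $a\succ(\delta b\prec\delta c)$, and a term-by-term application of the original second axiom finishes the argument. The third axiom is handled in exactly the same way, with the original axiom (D3) applied three times to the three pairs of terms that arise.

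The main obstacle is purely bookkeeping: each side of each axiom unfolds into six (or, for the third axiom, twelve) summands, and one must keep careful track of where each $\delta$ is placed so that the groupings line up correctly with the three instances of the original dendriform axiom. Provided the identities $\delta^2(x\prec y)=\delta x\prec\delta y$ and $\delta^2(x\succ y)=\delta x\succ\delta y$ from the preceding proposition are invoked cleanly, no further input beyond the Leibniz rule and the original axioms is needed.
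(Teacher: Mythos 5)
Your proof is correct: the paper actually states this theorem without proof, and your argument is exactly the direct verification one would expect, mirroring step for step the paper's own proof of the zinbiel analogue (Theorem \ref{Th4.2}) --- one Leibniz expansion, the identity $\delta^2(a\,\square\, b)=\delta a\,\square\,\delta b$ for $\square\in\{\prec,\succ\}$, and three instances of each original dendriform axiom. Nothing further is needed.
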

\begin{proposition}
	Let ($L,\prec_{\delta},\succ_{\delta}$) be a \textbf{twisted dendriform algebra by invertible derivation}. Then, we have 
	\begin{eqnarray}
	(x\prec y)\prec \delta z&=&\delta x \prec(y\prec z+y\succ z),\label{eq4.7} \\ 
	(x\succ y)\prec \delta z&=&\delta x\succ (y\prec z),\label{eq4.8} \\
	\delta x\succ(y\succ z)&=&(x\prec y+x\succ y) \succ \delta z,\label{eq4.9}
	\end{eqnarray}
	where $x,y \in L$.
\end{proposition}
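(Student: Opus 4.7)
The plan is to derive each of the three identities by applying the corresponding twisted dendriform axiom for $(L,\prec_\delta,\succ_\delta)$, stripping off a common outer $\delta$, and then cancelling against the three original dendriform axioms of $(L,\prec,\succ)$. Since $\prec_\delta = \delta\circ\prec$ and $\succ_\delta = \delta\circ\succ$, every term appearing in the three twisted axioms is a value of $\delta$, so both sides of every twisted axiom lie in the image of $\delta$; because $\delta\in\textbf{InvDer(L)}$ is invertible and hence injective, I can peel the outer $\delta$ off both sides. After this step the remaining $\delta$'s inside can be pushed through $\prec$ and $\succ$ using the derivation property, producing a sum of terms in the original operations that I then reshape with the original dendriform axioms.

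For \eqref{eq4.7} I would start from $(x\prec_\delta y)\prec_\delta z = x\prec_\delta(y\prec_\delta z + y\succ_\delta z)$. Peeling the outer $\delta$ and expanding gives $(\delta x\prec y)\prec z + (x\prec\delta y)\prec z = x\prec(\delta y\prec z) + x\prec(y\prec\delta z) + x\prec(\delta y\succ z) + x\prec(y\succ\delta z)$. Applying the original axiom D1 at the triples $(\delta x,y,z)$ and $(x,\delta y,z)$ rewrites the left side as six $\prec$-terms; four of them cancel against the right, leaving $\delta x\prec(y\prec z + y\succ z) = x\prec(y\prec\delta z) + x\prec(y\succ\delta z)$, and a final use of original D1 at $(x,y,\delta z)$ identifies the right-hand side with $(x\prec y)\prec\delta z$.

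The identities \eqref{eq4.8} and \eqref{eq4.9} follow by the same recipe applied to the remaining twisted axioms D2 and D3. Peeling $\delta$ from $(x\succ_\delta y)\prec_\delta z = x\succ_\delta(y\prec_\delta z)$ and expanding reduces, after one use of the original D2 at $(\delta x,y,z)$ and $(x,\delta y,z)$, to $\delta x\succ(y\prec z) = x\succ(y\prec\delta z)$; a single invocation of the original D2 at $(x,y,\delta z)$ rewrites $x\succ(y\prec\delta z)$ as $(x\succ y)\prec\delta z$, which is \eqref{eq4.8}. Similarly, peeling $\delta$ from the twisted D3 and cancelling via the original D3 at $(x,\delta y,z)$ yields $x\succ(y\succ\delta z) = (\delta x\prec y + \delta x\succ y)\succ z$; the original D3 at $(\delta x,y,z)$ collapses the right side to $\delta x\succ(y\succ z)$, and the original D3 at $(x,y,\delta z)$ collapses the left side to $(x\prec y + x\succ y)\succ\delta z$, producing \eqref{eq4.9}.

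There is no genuine obstacle; the argument is essentially bookkeeping once the strategy is fixed. The one point requiring care is matching each expanded term against the correct substitution instance of the original axioms D1--D3 — the presence of $\delta$ in exactly one argument per term makes this a careful but mechanical sorting exercise, and injectivity of $\delta$ (guaranteed by the $\textbf{InvDer(L)}$ hypothesis) is what makes the initial peeling step legitimate.
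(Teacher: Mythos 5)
Your proposal is correct and follows essentially the same route as the paper: apply the twisted axiom, peel the outer $\delta$ using injectivity, expand the remaining $\delta$'s by the Leibniz rule, cancel the $\delta y$-terms via the original axiom at $(x,\delta y,z)$, and regroup with the original axiom at $(\delta x,y,z)$ and $(x,y,\delta z)$. Apart from a harmless miscount of how many terms appear and cancel in the first identity, the argument matches the paper's proof step for step.
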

\begin{proof}
	Since $\delta \in \textbf{InvDer(L)}$ and ($(L,\prec_{\delta},\succ_{\delta}$) is a twisted dendrifrom algebra by invertible derivation, we get
	\begin{align*}
	(x\prec_{\delta}y)\prec_{\delta} z-x\prec_{\delta}(y\prec_{\delta}z+y\succ_{\delta}z)=0,
	\end{align*}
	which gives us
	\begin{align*}
	\delta \circ (\delta (x\prec y)\prec z-x\prec\delta(y\prec z)-x\prec \delta(y\succ z)).
	\end{align*}
	Since $\delta$ is invertible, the above equation implies
	\begin{align*}
	0&=\delta (x\prec y)\prec z-x\prec\delta(y\prec z)-x\prec \delta(y\succ z).
	\end{align*}
	From the above equation, we get
    \begin{align*}
	(\delta x\prec y)\prec z-x\prec(y\prec \delta z)-x\prec(y\succ \delta z)=0,
	\end{align*}
	and consequently
    \begin{align*}
	\delta x\prec(y\prec z+y\succ z)-(x\prec y)\prec \delta z=0.
	\end{align*}
	So the first assertion holds. Now, we check the second one. Using the definition of $\prec_{\delta}$ and $\succ_{\delta}$ we get
	\begin{align*}
	0&=(x\succ_{\delta} y)\prec_{\delta} z-x\succ_{\delta}(y\prec_{\delta}z)=\delta(x\succ y)\prec z-x\succ \delta (y\prec z)\\
	&=(\delta x\succ y)\prec z+(x\succ\delta y)\prec z-x\succ(\delta y\prec z)-x\succ(y\prec\delta z) \\
	&=(\delta x\succ y)\prec z-x\succ(y\prec\delta z)\\
	&=\delta x\succ(y\prec z)-(x\succ y)\prec\delta z.
	\end{align*}
	The last one is similar to the first assertion. This completes the proof.
\end{proof}
Now, the previous results allow us to announce the following definition
\begin{definition}
	An \textbf{InvDer dendriform algebra} is a quadruple $(L,\prec,\succ,\delta)$, where $(L,\prec,\succ)$ is a dendriform algebra and $\delta \in \textbf{InvDer(L)}$ such that, for all $x,y,z\in L$,
	\begin{align*}
	(x\prec y)\prec \delta z&=\delta x \prec(y\prec z+y\succ z), \\ 
	(x\succ y)\prec \delta z&=\delta x\succ (y\prec z), \\
	\delta x\succ(y\succ z)&=(x\prec y+x\succ y) \succ \delta z.
	\end{align*}
\end{definition}
It is known that from a dendriform algebra many algebras rises like associative, zinbiel, pre-Lie algebras (see \cite{A1}, for more details). In the next we generalize this algebraic structural relation to the case of \textbf{InvDer algebras}.
\begin{proposition}
	An \textbf{InvDer dendriform algebra} $(L,\prec,\succ,\delta)$ satisfying $x\succ y=\prec x=x\diamond y$, for all $x,y \in L$, is an \textbf{InvDer zinbiel algebra}.
\end{proposition}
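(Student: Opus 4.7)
The plan is to set $x \diamond y := x \succ y$, which by hypothesis coincides with $y \prec x$; equivalently $x \prec y = y \diamond x$. Under this identification each axiom of an \textbf{InvDer dendriform algebra} should translate into a statement about $\diamond$, and I will verify that the translation yields exactly the axioms of an \textbf{InvDer zinbiel algebra}.

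First I would check that $(L,\diamond)$ is a zinbiel algebra. Rewriting the third dendriform axiom $x\succ(y\succ z)=(x\prec y+x\succ y)\succ z$ under the substitution produces $x\diamond(y\diamond z)=(y\diamond x+x\diamond y)\diamond z=(x\diamond y)\diamond z+(y\diamond x)\diamond z$, which is exactly the zinbiel identity. A quick inspection shows that the first dendriform axiom reduces, after relabeling the variables, to the same zinbiel relation, and that the second axiom $(x\succ y)\prec z=x\succ(y\prec z)$ translates to $z\diamond(x\diamond y)=x\diamond(z\diamond y)$, which is a consequence of two applications of zinbiel; so no extra constraint is imposed.

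Second I would verify $\delta\in\textbf{InvDer(L)}$ with respect to $\diamond$. Since $\delta$ is a derivation of $\succ$, $\delta(x\diamond y)=\delta(x\succ y)=\delta x\succ y+x\succ\delta y=\delta x\diamond y+x\diamond\delta y$, and the same argument applied to $\delta^{-1}$ (which is already a derivation of $\succ$) shows that $\delta^{-1}$ is a derivation of $\diamond$; invertibility of $\delta$ is given by hypothesis.

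Finally I would deduce the InvDer zinbiel identity \eqref{eq4.3} from the third InvDer dendriform identity $\delta x\succ(y\succ z)=(x\prec y+x\succ y)\succ\delta z$: its left-hand side becomes $\delta x\diamond(y\diamond z)$, while its right-hand side becomes $(y\diamond x+x\diamond y)\diamond\delta z=(x\diamond y)\diamond\delta z+(y\diamond x)\diamond\delta z$. No step presents a real obstacle; the only mildly subtle point is to track the order of arguments carefully, in particular that $x\prec y$ corresponds to $y\diamond x$ rather than $x\diamond y$, when converting between the dendriform and zinbiel sides.
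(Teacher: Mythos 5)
Your argument is correct and its key step---deriving the InvDer zinbiel identity \eqref{eq4.3} from the third InvDer dendriform identity \eqref{eq4.9} via the dictionary $x\diamond y=x\succ y=y\prec x$---is exactly the computation that constitutes the paper's entire proof. The additional verifications you include (that $(L,\diamond)$ is a zinbiel algebra, that the second dendriform axiom imposes no new constraint, and that $\delta$ and $\delta^{-1}$ remain derivations for $\diamond$) are omitted by the paper as part of the standard dendriform-to-zinbiel passage, so your proposal is a slightly more complete version of the same approach.
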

\begin{proof}
	We have 
	\begin{align*}
	\delta x \diamond (y\diamond z)-(x\diamond y) \diamond \delta z-(y\diamond x)\diamond \delta z &=\delta x \diamond (y\diamond z)-(x\diamond y+y\diamond x)\diamond \delta z \\
	&=\delta x\succ(y\succ z)-(x\prec y+x\succ y)\succ \delta z \\
	&=0,
	\end{align*}
	where $x,y,z \in L$. This completes the proof.
\end{proof}
\begin{proposition}
	Let $(L,\prec,\succ,\delta)$ be an \textbf{InvDer dendriform algebra}. Then $(L,\mu,\delta)$ is axactly an \textbf{InvDer associative algebra}, where 
	\begin{equation*}
	\mu(x,y)=x\succ y+x\prec y, \ \ \ \text{ for } x,y \in L.
	\end{equation*}
\end{proposition}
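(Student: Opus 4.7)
The plan is to verify three things in turn: associativity of $\mu$, the InvDer associativity identity, and the fact that $\delta, \delta^{-1}$ are derivations of $\mu$.

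First I would check associativity of $\mu$ purely from the underlying dendriform structure $(L,\prec,\succ)$, which is automatic and does not require $\delta$. Expanding
\begin{equation*}
\mu(\mu(x,y),z)=(x\prec y)\prec z+(x\succ y)\prec z+(x\prec y+x\succ y)\succ z,
\end{equation*}
and then applying the three standard dendriform axioms (the ungraded, $\delta$-free versions of the three identities in the definition, which hold since $(L,\prec,\succ)$ is a dendriform algebra), one rewrites each summand in the right-hand form and recombines to obtain $x\prec\mu(y,z)+x\succ\mu(y,z)=\mu(x,\mu(y,z))$. This is the classical dendriform $\to$ associative passage and is routine.

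Next comes the main content, the InvDer associativity identity $\mu(\delta x,\mu(y,z))=\mu(\mu(x,y),\delta z)$. I would expand the left-hand side as
\begin{equation*}
\delta x\succ(y\succ z)+\delta x\succ(y\prec z)+\delta x\prec(y\succ z+y\prec z),
\end{equation*}
and then apply the three \textbf{InvDer dendriform} identities \eqref{eq4.7}, \eqref{eq4.8}, \eqref{eq4.9} term by term. Each of the three InvDer identities is tailor-made to swap $\delta$ from the first slot to the third slot in exactly one of the three bilinear combinations above, producing respectively $(x\prec y+x\succ y)\succ\delta z$, $(x\succ y)\prec\delta z$, and $(x\prec y)\prec\delta z$. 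Summing these three gives $\mu(x,y)\succ\delta z+\mu(x,y)\prec\delta z=\mu(\mu(x,y),\delta z)$, as required. This is the core step; the only subtlety is bookkeeping of the four $\succ/\prec$ terms and noting that the $\delta x\prec(\cdots)$ piece already comes in the combined form needed to invoke \eqref{eq4.7}.

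Finally, since $\delta\in\textbf{InvDer(L)}$ on $(L,\prec,\succ)$, it is a derivation for each of the two operations, and additivity of the derivation property gives
\begin{equation*}
\delta(\mu(x,y))=\delta(x\succ y)+\delta(x\prec y)=\mu(\delta x,y)+\mu(x,\delta y),
\end{equation*}
so $\delta$ is a derivation of $\mu$; the same argument with $\delta^{-1}$ shows $\delta^{-1}$ is also a derivation of $\mu$, hence $\delta\in\textbf{InvDer}(L,\mu)$. Combined with associativity and the InvDer associativity identity, this completes the proof. I expect no real obstacle beyond correctly packaging the four $\succ/\prec$ summands in the middle step; the rest is formal.
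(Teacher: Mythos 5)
Your proof is correct and the core step — expanding both $\mu(\delta x,\mu(y,z))$ and $\mu(\mu(x,y),\delta z)$ into four $\succ/\prec$ summands and cancelling them pairwise via the three InvDer dendriform identities \eqref{eq4.7}--\eqref{eq4.9} — is exactly the computation in the paper's proof. The paper omits the routine verifications that $\mu$ is associative and that $\delta,\delta^{-1}$ are derivations of $\mu$, so your write-up is in fact slightly more complete, but not different in approach.
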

\begin{proof}
	Let $x,y,z \in L$, so we have
	\begin{align*}
	\mu(\mu(x,y),\delta z)-\mu(\delta x,\mu(y,z))&=\mu(x,y)\succ\delta z+\mu(x,y)\prec\delta z-\delta x \succ \mu(y,z)-\delta x \prec \mu(y,z) \\
	&=(x\succ y)\succ\delta z+(x\prec y)\succ \delta z+(x\succ y)\prec\delta z+(x\prec y)\prec \delta z \\
	&-\delta x\succ(y\succ z)-\delta x\succ(y\prec z)-\delta x\prec(y\succ z)-\delta x\prec(y\prec z) \\
	&=0,
	\end{align*}
	which completes the proof.
\end{proof}
\begin{proposition}
	Let $(L,\prec,\succ,\delta)$ be an \textbf{InvDer dendriform algebra}. Then $(L,\star,\delta)$ is an \textbf{InvDer pre-Lie algebra}, where 
	\begin{equation*}
	x\star y=x\succ y-y\prec x, \ \ \ \forall x,y \in L.
	\end{equation*}
\end{proposition}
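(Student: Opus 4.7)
The plan is to verify the three ingredients that make $(L,\star,\delta)$ into an \textbf{InvDer pre-Lie algebra}: (a) the underlying bracket $\star$ satisfies the pre-Lie identity, (b) $\delta$ and $\delta^{-1}$ are derivations with respect to $\star$, and (c) the InvDer pre-Lie axiom
\[
\delta x \star (y\star z)-(x\star y)\star \delta z=\delta y \star (x\star z)-(y\star x)\star \delta z
\]
holds. For (a), which is the classical ``dendriform $\Rightarrow$ pre-Lie'' passage of Chapoton--Livernet, I would expand $(x\star y)\star z - x\star(y\star z)$ fully in terms of $\prec$ and $\succ$ and apply the three dendriform axioms of $(L,\prec,\succ)$ to rewrite the terms. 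The resulting expression involves only the symmetric combinations $x\prec y+y\prec x$, $x\succ y+y\succ x$, $z\prec(y\succ x+x\succ y)$, and $x\succ(z\prec y)+y\succ(z\prec x)$, all of which are invariant under $x\leftrightarrow y$; this gives pre-Lie symmetry.

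For (b), the computation is immediate: since $\delta$ is a derivation for both $\prec$ and $\succ$,
\[
\delta(x\star y)=\delta(x\succ y)-\delta(y\prec x)=(\delta x\succ y+x\succ\delta y)-(\delta y\prec x+y\prec\delta x)=\delta x\star y+x\star\delta y,
\]
and the same argument with $\delta^{-1}$ in place of $\delta$ handles the inverse, so $\delta\in\textbf{InvDer(L)}$ for $(L,\star)$.

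The main obstacle, and the bulk of the work, is (c). I would expand both sides fully, producing eight $\prec/\succ$-terms on each side. Then I apply the three InvDer dendriform identities at the appropriate spots: the third identity $\delta x\succ(y\succ z)=(x\prec y+x\succ y)\succ\delta z$ converts the ``pure $\succ$'' terms on the left into mixed $\succ\delta z$-terms that cancel exactly the already-present such terms (and likewise for the $y$-side); the first identity $(x\prec y)\prec \delta z=\delta x\prec(y\prec z+y\succ z)$ converts terms of the form $(z\prec y)\prec\delta x$ and $(z\prec x)\prec\delta y$ into $\delta z\prec(\cdot)$-terms which then cancel in pairs; and the middle identity $(x\succ y)\prec\delta z=\delta x\succ(y\prec z)$ (applied in both directions) is used to match $\delta x\succ(z\prec y)$ with $(x\succ z)\prec\delta y$ and $(y\succ z)\prec\delta x$ with $\delta y\succ(z\prec x)$. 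After these substitutions every term pairs with its opposite and the difference collapses to zero.

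The routine but bookkeeping-heavy part is organizing the sixteen resulting terms so that one sees the cancellations; no deeper tool is needed beyond the three InvDer dendriform identities and skew-symmetric manipulation of the sum. Once (c) is established alongside (a) and (b), the definition of \textbf{InvDer pre-Lie algebra} is verified and the proof is complete.
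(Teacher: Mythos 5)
Your proposal is correct and is precisely the direct verification that the paper omits (its printed proof consists only of the sentence ``The proof is easy to be verified''): the pre-Lie identity for $\star$ is the classical dendriform-to-pre-Lie passage, the derivation property of $\delta$ and $\delta^{-1}$ for $\star$ is immediate, and expanding $\delta x\star(y\star z)-(x\star y)\star\delta z$ and applying the three InvDer dendriform identities does reduce the difference of the two sides of the InvDer pre-Lie axiom to zero. One small imprecision in your narrative: identity (iii) does not make $\delta x\succ(y\succ z)$ cancel $(x\succ y)\succ\delta z$ outright but leaves the residue $(x\prec y)\succ\delta z$, which together with the already-present $(y\prec x)\succ\delta z$ forms a term symmetric under $x\leftrightarrow y$ that cancels against its counterpart on the other side (and similarly identity (i) leaves $\delta z\prec(y\succ x)$, which with $\delta z\prec(x\succ y)$ is again symmetric), while the remaining cross terms $\delta x\succ(z\prec y)$, $(y\succ z)\prec\delta x$ and their mirrors match pairwise via identity (ii) --- so the computation closes exactly as you claim.
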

\begin{proof}
	The proof is easy to be verified.
\end{proof}
In the next theorem we investigate the well known D.Yau twist in the case of \textbf{InvDer associative algebras}
\begin{theorem}
	Let $(L,\mu)$ be an associative algebra and $\delta \in \textbf{InvDer(L)}$. Then $(L,\mu_{\delta}=\delta \circ \mu,\delta)$ is an \textbf{InvDer associative algebra} if and only if $(L,\mu,\delta)$ is an \textbf{InvDer associative algebra}
\end{theorem}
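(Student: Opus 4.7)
My plan is to unpack the three defining requirements for $(L,\mu_\delta,\delta)$ to be an InvDer associative algebra and match each against the corresponding requirement on $(L,\mu,\delta)$. Under the standing hypotheses, the associativity of $\mu_\delta$ is precisely the content of Theorem~\ref{thm3.4}, so this piece is free. Next, $\delta$ and $\delta^{-1}$ automatically remain derivations of $\mu_\delta$: since $\delta$ is a derivation of $\mu$, one computes $\delta(\mu_\delta(x,y)) = \delta^2\mu(x,y) = \delta(\mu(\delta x, y) + \mu(x, \delta y)) = \mu_\delta(\delta x, y) + \mu_\delta(x, \delta y)$, and the identical check works for $\delta^{-1}$. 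Hence the membership $\delta \in \textbf{InvDer(L)}$ transfers freely between the two products, so the only substantive content is the InvDer associative identity itself.

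For this, I compute $\mu_\delta(\delta x, \mu_\delta(y,z)) - \mu_\delta(\mu_\delta(x,y), \delta z) = \delta\bigl[\mu(\delta x, \delta\mu(y,z)) - \mu(\delta\mu(x,y), \delta z)\bigr]$ and, inside the bracket, apply the Leibniz rule to expand $\delta\mu(y,z)$ and $\delta\mu(x,y)$, then reassociate using the associativity of $\mu$. After expansion, two of the six resulting terms (the repeated $\mu(\mu(\delta x, y), \delta z)$ on each side) cancel, and the bracket collapses to $\mu(\mu(\delta x, \delta y), z) - \mu(\mu(x, \delta y), \delta z)$. By invertibility of $\delta$, the InvDer identity for $\mu_\delta$ is therefore equivalent to $\mu(\mu(\delta x, \delta y), z) = \mu(\mu(x, \delta y), \delta z)$, which by a further reassociation reads $\mu(\delta x, \mu(\delta y, z)) = \mu(x, \mu(\delta y, \delta z))$.

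Finally, because $\delta$ is a bijection on $L$, the substitution $y' := \delta y$ is legal, and the identity becomes $\mu(\delta x, \mu(y', z)) = \mu(x, \mu(y', \delta z))$ for all $x,y',z \in L$; one last use of associativity rewrites the right-hand side as $\mu(\mu(x, y'), \delta z)$, giving exactly the InvDer associative identity for $\mu$. Combined with the preparatory steps, this proves both directions of the biconditional. The main obstacle I expect is the bookkeeping of the six-term Leibniz expansion: one must identify which pair of terms cancel and then recognize the surviving identity, after untwisting by $y \mapsto \delta y$, as the InvDer identity for $\mu$.
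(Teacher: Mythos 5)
Your proposal is correct, and it is in fact more complete than the paper's own argument, which consists solely of the identity computation and does not explicitly address the associativity of $\mu_{\delta}$ or the transfer of the derivation conditions — both of which you dispatch at the outset. On the core computation the two routes genuinely differ. The paper expands the \emph{outer} $\delta$ in $\delta\mu(\delta\mu(x,y),\delta z)-\delta\mu(\delta x,\delta\mu(y,z))$ by the Leibniz rule and then invokes the characterization $\delta^2(\mu(x,y))=\mu(\delta x,\delta y)$ of \eqref{eq3.3} twice: once so that two of the four terms cancel by associativity, and once to recognize the two survivors as $\delta^2\bigl(\mu(\mu(x,y),\delta z)-\mu(\delta x,\mu(y,z))\bigr)$; thus the $\mu_{\delta}$-defect at $(x,y,z)$ is $\delta^2$ of the $\mu$-defect at the \emph{same} triple, and the biconditional follows from injectivity of $\delta^2$. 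You instead factor out the outer $\delta$, expand the \emph{inner} occurrences of $\delta\circ\mu$, and show the $\mu_{\delta}$-defect at $(x,y,z)$ equals $\delta$ applied to the $\mu$-defect at the \emph{reparametrized} triple $(x,\delta y,z)$; the biconditional then uses injectivity of $\delta$ together with surjectivity to undo $y\mapsto\delta y$. Your route never uses \eqref{eq3.3}, so the equivalence of the two identities needs only that $\delta$ is a bijective derivation, not that $\delta^{-1}$ is a derivation (that hypothesis still enters through Theorem \ref{thm3.4} and the InvDer membership); the paper's route leans on \eqref{eq3.3} but stays at the same triple throughout. Both are valid; yours isolates more sharply where each hypothesis is used and makes the "if and only if" explicit, whereas the paper's final line only records the forward direction. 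One trivial slip: your Leibniz expansion produces four terms, not six, before the associativity cancellation.
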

\begin{proof}
	For any $x,y,z \in L$, we get
	\begin{align*}
	\mu_{\delta}(\mu_{\delta}(x,y),\delta z)-\mu_{\delta}(\delta x,\mu_{\delta}(y,z))&=\mu_{\delta}\circ (\delta \circ \mu(x,y),\delta z)-\mu_{\delta}(\delta x,\delta \circ \mu(y,z)) \\
	&=\delta \circ \mu (\delta \circ \mu(x,y),\delta z)-\delta \circ \mu(\delta x,\delta \circ \mu(y,z)) \\
	&=\mu (\delta^2\circ \mu(x,y),\delta z)+\mu (\delta \circ \mu(x,y),\delta^2z) \\
	&-\mu(\delta^2x,\delta\circ \mu(y,z))-\mu(\delta x,\delta^2\circ \mu(y,z))\\
	&=\mu (\mu(\delta x,\delta y),\delta z)+\mu (\delta \circ \mu(x,y),\delta^2z) \\
	&-\mu (\delta^2x,\delta \circ \mu(y,z))-\mu(\delta x,\mu(\delta y,\delta z))\\
	&=\mu (\delta \circ \mu(x,y),\delta^2z) 
	-\mu (\delta^2x,\delta \circ \mu(y,z))\\
	&=\delta^2 \circ\big(\mu(\mu(x,y),\delta z)-\mu(\delta x,\mu(y,z)) \big) \\
	&=0,
	\end{align*}
	so the assertion is hold.
\end{proof}
Now we generalize the previous theorem to other algebras that we studied before.
\begin{corollary}
	Let $L=(L,\mu)$ be an algebra (not-necessarily associative) and $\delta \in \textbf{InvDer(L)}$. Denoting $L_{\delta}:=(L,\mu_{\delta}=\delta \circ \mu,\delta)$, the following assertions hold:
	\begin{enumerate}
		\item [1-] If $L$ is a Lie algebra, then $L_{\delta}$ is an \textbf{InvDer Lie algebra} if and only if $(L,\mu,\delta)$ is too,\\
		\item [2-] If $L$ is a pre-Lie algebra, then $L_{\delta}$ is an \textbf{InvDer pre-Lie algebra} if and only if $(L,\mu,\delta)$ is too,\\
		\item [3-] If $L$ is a zinbiel algebra, then $L_{\delta}$ is an \textbf{InvDer zinbiel algebra} if and only if $(L,\mu,\delta)$ is too.
	\end{enumerate}
\end{corollary}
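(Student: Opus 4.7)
The plan is to imitate the proof of the preceding theorem (the associative case) in each of the three cases, reducing the InvDer-identity on $L_\delta$ to an outer power of $\delta$ applied to the InvDer-identity on $(L,\mu,\delta)$. First, I would invoke the already established twist theorems to pin down the underlying algebraic structure on $L_\delta$: Theorem \ref{thm2.1} gives a Lie algebra $(L,[\cdot,\cdot]_\delta)$, Theorem \ref{thm2.2} a pre-Lie algebra $(L,\ast_\delta)$, and Theorem \ref{Th4.2} a zinbiel algebra $(L,\diamond_\delta)$. Second, I would check that $\delta$ remains an element of $\textbf{InvDer}(L,\mu_\delta)$ in each case: applying $\delta$ to the derivation rule $\delta\mu(x,y)=\mu(\delta x,y)+\mu(x,\delta y)$ yields $\delta^2\mu(x,y)=\delta\mu(\delta x,y)+\delta\mu(x,\delta y)$, which is exactly $\delta\mu_\delta(x,y)=\mu_\delta(\delta x,y)+\mu_\delta(x,\delta y)$, and the symmetric argument handles $\delta^{-1}$.

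The heart of the argument is to expand the InvDer-identity on $L_\delta$ by unwinding $\mu_\delta=\delta\circ\mu$, distributing $\delta$ through the inner operation via the derivation property, and then converting every residual $\delta^2\mu(\cdot,\cdot)$ into $\mu(\delta\cdot,\delta\cdot)$ by the twist-compatibilities \eqref{eq2.2}, \eqref{eq2.6}, \eqref{eq4.1}. In the Lie case the computation is especially short:
\[
[\delta x,[y,z]_\delta]_\delta \;=\; \delta[\delta x,\delta[y,z]] \;=\; \delta\cdot\delta^2[x,[y,z]] \;=\; \delta^3[x,[y,z]],
\]
so the cyclic sum for $L_\delta$ becomes $\delta^3$ times the classical Jacobi sum of $(L,[\cdot,\cdot])$. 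For pre-Lie and zinbiel the analogous calculation, modeled line-by-line on the preceding associative theorem, will produce $\delta^2$ applied to the defining identity of $(L,\mu,\delta)$. Since $\delta$ is invertible, every power of $\delta$ is injective, so the InvDer-identity on $L_\delta$ vanishes iff the one on $(L,\mu,\delta)$ does, giving the desired equivalence in all three cases.

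The main obstacle is purely the bookkeeping in the pre-Lie and zinbiel cases: the InvDer-identity contains three or four terms, each of which splits into several pieces after two applications of $\mu_\delta=\delta\circ\mu$, and one must carefully track signs and apply the twist-compatibility at the correct moment so that everything reassembles as $\delta^2$ of the target identity. No new idea is required beyond the preceding associative theorem; the three cases differ only in the shape of the defining identity and in which of \eqref{eq2.2}, \eqref{eq2.6}, \eqref{eq4.1} is used to bridge the calculation.
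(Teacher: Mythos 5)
Your overall strategy is the one the paper intends (it proves only the associative case in detail and leaves this corollary to the same $\delta$-factorization argument), and your preliminary step---checking that $\delta$ and $\delta^{-1}$ remain derivations for $\mu_\delta$---is a necessary part of the proof that you handle correctly. However, your explicit Lie-case computation does not deliver the equivalence you claim from it. You obtain
$\circlearrowleft_{x,y,z}[\delta x,[y,z]_\delta]_\delta=\delta^3\circlearrowleft_{x,y,z}[x,[y,z]]$, i.e.\ $\delta^3$ applied to the \textbf{classical} Jacobi sum, which vanishes identically because $L$ is assumed to be a Lie algebra. That is not a power of $\delta$ applied to the InvDer-Jacobi sum $\circlearrowleft_{x,y,z}[\delta x,[y,z]]$, so injectivity of $\delta^3$ only gives you the unconditional vanishing of the identity on $L_\delta$; it says nothing about whether the identity holds on $(L,[\cdot,\cdot],\delta)$, and hence the ``iff'' does not follow from that line. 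The conclusion is still true, but for a reason you never invoke: by Proposition \ref{prop2.2} together with \eqref{eq2.4}, the InvDer-Jacobi identity $\circlearrowleft_{x,y,z}[\delta x,[y,z]]=0$ holds automatically for any Inv-derivation on a Lie algebra, so both sides of the equivalence are unconditionally true. Either cite that fact, or redo the expansion keeping one $\delta$ outside, as the paper's associative proof does: expanding $\delta[y,z]=[\delta y,z]+[y,\delta z]$, using \eqref{eq2.2} and Proposition \ref{prop2.2}, one finds $\circlearrowleft_{x,y,z}[\delta x,[y,z]_\delta]_\delta=2\,\delta^2\circlearrowleft_{x,y,z}[\delta x,[y,z]]$, from which the equivalence genuinely follows in characteristic $0$.

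The pre-Lie and zinbiel cases as you sketch them are sound in outline, but the same caution applies: if you push both copies of $\delta$ all the way inside you land on $\delta^3$ of the underlying pre-Lie (resp.\ zinbiel) identity, which is again vacuously zero and proves nothing about the InvDer condition on $(L,\mu,\delta)$. To get $\delta^2$ of the InvDer identity, as in the paper's associative computation, you must stop one level earlier and cancel the resulting cross terms $\delta x\ast(\delta y\ast\delta z)-(\delta x\ast\delta y)\ast\delta z-\delta y\ast(\delta x\ast\delta z)+(\delta y\ast\delta x)\ast\delta z$ using the underlying identity evaluated at $\delta x,\delta y,\delta z$; this extra cancellation step is not mentioned in your plan and is exactly where the bookkeeping can go wrong.
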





\begin{thebibliography}{99}
\bibitem{A1} M. Aguiar, {\it Pre-poisson algebras}, Letters in Mathematical Physics, {\bf 54} (2000), 263--277.

\bibitem{C1} L. M. Camacho, J. R. G\'{o}mez, and R. M. Navarro, {\it Algebra of derivations of Lie algebras},  Linear Algebra and its Applications, {\bf 332} (2001), 371--380.

\bibitem{H1} J. T. Hartwig, D. Larsson and S. D. Silvestrov, {\it Deformations of Lie algebras using $\sigma$-derivations}, Journal of algebra, {\bf 295}(2)  (2006), 314--361.

\bibitem{L1} J. L. Loday, {\it Cup-product for Leibniz cohomology and dual Leibniz algebras}, Mathematica Scandinavica, {\bf 77}(2) (1995), 189--196.

\bibitem{L2} J. L. Loday,  F. Chapoton, A. Frabetti and F. Goichot, {\it Dialgebras and related operads}, volume 1763 of Lecture Notes in Mathematics, (2001).

\bibitem{M1} A. Makhlouf and S. Silvestrov, {\it Hom-algebras and Hom-coalgebras}, Journal of Algebra and its Applications, {\bf 9}(4) (2010), 553--589.

\bibitem{M2} W. A. Martinez and S. I. Ceron, {\it Construction of some non-associative algebras from associative algebras with a endomorphism operator, differential operator or left averaging operator},  arXiv:2301.11770, (2023).

\bibitem{T1} R. Tang, Y. Fregier, and Y. Sheng, {\it Cohomologies of a Lie algebra with a derivation and applications}, Journal of Algebra, {\bf 534} (2019), 65--99.

\bibitem{Y1} D. Yau, {\it Hom-algebras and homology}, Journal of Lie Theory, {\bf 19} (2009), 409--421.
\end{thebibliography}
\end{document}